\documentclass[12pt]{amsart}%

\usepackage{amssymb}
\usepackage{amsmath}
\usepackage{amsthm}
\usepackage{amscd}
\usepackage[margin=1.0in]{geometry}%
\usepackage{hyperref}

\pagestyle{plain}

\theoremstyle{plain}
\newtheorem{theorem}{Theorem}[section]

\newtheorem{corollary}{Corollary}[section]
\newtheorem{lemma}{Lemma}[section]

\theoremstyle{remark}
\newtheorem{definition}{Definition}[section]
\newtheorem{remark}{Remark}[section]

\DeclareMathOperator{\lin}{span}

\DeclareMathOperator{\dom}{dom}
\DeclareMathOperator{\Imag}{Im}

\subjclass[2000]{Primary 60J25, 81Q35; Secondary 
05C81, %(random walks on graphs)
28A25, %(Integration w.r.t. measures)
31C25, %(Dirichlet spaces)
46J10, %(function algebras)
47A07, %(bilinear forms)
47A10, %(spectrum, resolvent)
53C23,
54E45, %(locally compact metric spaces),
}

\keywords{Mosco convergence, strong resolvent convergence, lower semicontinuous quadratic forms, Dirichlet forms, countably generated and uniformly closed algebras of bounded functions,  Laplacians on graphs, random walks.}
\begin{document}

\begin{abstract}
We consider a countably generated and uniformly closed algebra of bounded functions. We assume that there is a lower semicontinuous, with respect to the supremum norm, quadratic form and that normal contractions operate in a certain sense. Then we prove that a subspace of the effective domain of the quadratic form is naturally isomorphic to a core of a regular Dirichlet form on a locally compact separable metric space. We also show that any Dirichlet form on a countably generated measure space can be approximated by essentially discrete Dirichlet forms, i.e. energy forms on finite weighted graphs, in the sense of Mosco convergence, i.e. strong resolvent convergence.

\tableofcontents
\end{abstract}

\title{Closability, regularity, and  approximation by graphs for separable bilinear forms}
\author{Michael Hinz$^1$}
\address{$^1$ Fakult\"at f\"ur Mathematik, Universit\"at Bielefeld, Postfach 100131, 33501 Bielefeld, Germany}
\email{mhinz@math.uni-bielefeld.de}

\author{Alexander Teplyaev$^2$}
\address{$^2$Department of Mathematics, University of Connecticut, Storrs, CT 06269-3009 USA}
\email{teplyaev@uconn.edu}

\date{\today}

\maketitle

\section[Introduction and result]{Introduction and results}

The first purpose of the present note is to discuss quadratic forms on countably generated algebras of functions and a way to turn them into (symmetric) Dirichlet forms, \cite{BH91, FOT94, ChF, MR92}. This complements our former studies \cite{H14} and \cite{HKT}. 
The second purpose is to show that Dirichlet forms on countably generated measures spaces can be approximated, in the Mosco sense, by essentially discrete energy forms (essentially equivalent to the energy on finite graphs). This approximation is the same as  the approximation in the strong resolvent sense, \cite{RS}, 
and also  can be described as piece-wise constant (in particular, typically discontinuous) Galerkin approximations. 

Our paper is expository, although Theorems~\ref{T:main} and \ref{T:main2} previously have not appeared in the literature. The long term motivation for such connections comes from the fact that approximations by random walks allows to construct diffusion processes on some universal topological objects, 
\cite{KZ,BBKT, Ki,Str}, which in turn allows to construct notions of differential geometry 
not previously available, \cite{HKT,HRT,HTdms,HThodge}. 
One one hand, these notions may be applicable to complicated spaces defined algebraically, such as in 
\cite{Nekrashevich,NT}. 
On the other hand, there are potential physics applications for such notions, see 
\cite{Englert,Loll,Reuter}. 
We would like to note that random walks and discrete time martingales appear in a number of works of 
M.I.~Gordin, such as
\cite[and references therein]{Gordin,GP}, and our paper may be a step towards continuous versions of these results.

A \emph{Dirichlet form} $(\mathcal{E},\mathcal{F})$ is a symmetric, bilinear and positive definite real valued form $\mathcal{E}$ on a subspace $\mathcal{F}$ that is dense in a real $L^2$-space $L^2(X,\mathcal{X},\mu)$ over a $\sigma$-finite measure space $(X,\mathcal{X},\mu)$ such that 
\begin{itemize}
\item the space $\mathcal{F}$, together with the norm $f\mapsto (\mathcal{E}(f)+\left\|f\right\|_{L^2(X,\mathcal{X},\mu)}^2)^{1/2}$, is a Hilbert space (the '\emph{Dirichlet space}') and 
\item for any $f\in\mathcal{F}$ we have $(f\wedge 1)\vee 0 \in \mathcal{F}$ and $\mathcal{E}(f)\leq \mathcal{E}((f\wedge 1)\vee 0)$.
\end{itemize}
Here we use the notation $\mathcal{E}(f):=\mathcal{E}(f,f)$ which we will also employ for other symmetric bilinear expressions. There is a one-to-one correspondence of Dirichlet forms and non-positive definite self-adjoint operators on $L^2(X,\mathcal{X},\mu)$ satisfying a certain Markov property, see e.g.\cite[Proposition I.3.2.1]{BH91}. The self-adjoint operator $(\mathcal{L},\dom \mathcal{L})$ uniquely associated with $(\mathcal{E},\mathcal{F})$ (and referred to as its \emph{generator}) satisfies
\[\mathcal{E}(f,g)=-\left\langle \mathcal{L}f, g\right\rangle_{L^2(X,\mathcal{X},\mu)}, \ \ f\in \dom \mathcal{L}, \ g\in \mathcal{F},\]
and is uniquely determined by this formula. More background on Dirichlet forms can for instance be found in \cite{BH91, FOT94, MR92}.

A particularly rich theory is available if $X$ is a locally compact separable metric space, $\mathcal{X}$ the Borel-$\sigma$-algebra over $X$ and $\mu$ a nonnegative Radon measure on $X$ with full support. If in this case we can find a core $\mathcal{C}\subset C_c(X)\cap \mathcal{F}$ dense in the Hilbert space $\mathcal{F}$ and dense in $C_c(X)$ with respect to the supremum norm $\left\|\cdot\right\|_{\sup}$, then the Dirichlet form $(\mathcal{E},\mathcal{F})$ is called \emph{regular}. Here $C_c(X)$ denotes the space of continuous compactly supported functions on $X$, see \cite{FOT94}. A regular Dirichlet form is compatible with the topology on $X$, what provides strong connections between the calculus of variations and functional analysis on one hand and potential theory and stochastic processes on the other.

We are interested in starting from an algebra of functions (or even simpler, from a sequence of functions) on which a quadratic functional defined and then to 'create' a related Dirichlet form. This way, the quadratic functional (the '\emph{energy}') has priority over the choice of a reference measure or a topology on $X$. In a sense, this follows the spirit of early literature on Dirichlet forms, such as \cite{Allain} and \cite{And75}. Related (but different) discussions of representation theory for Dirichlet forms can be found in \cite{C06} and \cite[Appendix A.4]{FOT94}.

In \cite{HKT} we sketched how a given Dirichlet form on a measure space can be transferred into a Dirichlet form on a locally compact Hausdorff space. The latter is given by the Gelfand spectrum of an algebra obtained from bounded measurable functions of finite energy. (We would like to point out that unfortunately \cite{HKT} contained two inaccuracies: The transfer of an infinite measure  to the Gelfand spectrum needs to be corrected using a unitization procedure, \cite{Kaniuth}, and for the transferred form to be regular the given measure space must have a countably generated $\sigma$-algebra.) In \cite{H14} we considered real valued bilinear forms on algebras of bounded functions and used energy measures to obtain a Dirichlet form. We did not assume the algebra to be countably generated. Key ingredients were measure-free notions of closability and lower semicontinuity, such as the following.

\begin{definition}
Let $X$ be a nonempty set and $A$ a space of bounded real valued functions on $X$ which is complete with respect to the supremum norm. A quadratic functional $E:A\to [0+\infty]$ is called \emph{sup-norm-lower semicontinuous on $A$} if for any sequence $(f_n)_n\subset A$ with uniform limit $f\in A$ we have 
\[E(f)\leq \liminf_{n} E(f_n).\]
\end{definition}

In the present note we concentrate on notions of \emph{separability}. We start from a countably generated and uniformly closed algebra $\mathcal{A}$ of bounded real valued functions and a quadratic form $\mathcal{E}$ on $\mathcal{A}$, a priori extended real valued.

\begin{definition}
Let $X$ be a nonempty set and $\mathcal{A}$ a countably generated uniformly closed algebra of bounded real valued functions on $X$ that separates the points of $X$. A quadratic form $\mathcal{E}:\mathcal{A}\to [0,+\infty]$ is called a \emph{sup-norm-separable quadratic form} if there is a sequence $\left\lbrace f_n\right\rbrace_{n=1}^\infty\subset \mathcal{A}$ generating $\mathcal{A}$ such that $\mathcal{E}(f_n)<+\infty$ for all $n$. 
\end{definition}

Throughout the following let $\mathcal{A}$ be as in the definition. Since $\mathcal{A}$ is countably generated and uniformly closed, it is separable.

\begin{remark}
If $X$ itself is a locally compact separable metric space and $\mathcal{A}$ is an algebra of continuous functions that (in addition to point separation) vanishes nowhere on $X$, then by Stone-Weierstrass $\mathcal{A}=C_0(X)$ is the algebra of continuous functions vanishing at infinity. If $(\mathcal{E},\mathcal{F})$ is a regular Dirichlet form on $X$ and we consider $\mathcal{E}$ as an extended real valued quadratic form $C_0(X)$, then it is sup-norm-separable. Note that $\mathcal{E}$ is sup-norm-lower semicontinuous on $C_c(X)\cap \mathcal{F}$, see \cite[Theorem 2.1 and Theorem 10.1]{H14}.
\end{remark}

Restricted to its effective domain a quadratic form yields a symmetric bilinear form that can be transferred to the Gelfand spectrum $\Delta$ of the natural complexification of $\mathcal{A}$. It is a locally compact Hausdorff space, and by the separability of $\mathcal{A}$ it is second countable and metrizable. Proceeding as in \cite[Section 9]{H14} we can then extend the transferred form to a regular Dirichlet form on an $L^2$-space over the Gelfand spectrum $\Delta$. 

To formulate this result we fix two more notions. A quadratic form $\mathcal{E}$ on an algebra $\mathcal{A}$ of bounded real valued functions is called \emph{$\sigma$-finite} if $\mathcal{A}$ contains a strictly positive function $\chi$ such that $\mathcal{E}(\chi)<+\infty$. To a function $F:\mathbb{R}^k\to\mathbb{R}$ such that 
\[|F(x)-F(y)|\leq \sum_{i=1}^k|x_i-y_i|, \ \  x,y\in\mathbb{R}^k,\] 
and $F(0)=0$ we refer as a \emph{normal contraction} (in several variables). If the quadratic form $\mathcal{E}$ satisfies
\begin{equation}\label{E:normalcont}
\mathcal{E}(F(f_1,...,f_k))^{1/2}\leq \sum_{i=1}^k\mathcal{E}(f_i)^{1/2}
\end{equation}
for all $f_1,...,f_k\in\mathcal{A}$ and all normal contractions $F$, then we say that \emph{normal contractions} operate on $\mathcal{E}$, cf. \cite[Section I.3.3]{BH91}. 

\begin{theorem}\label{T:main}
Let $X$ be a nonempty set and let $\mathcal{A}$ be a countably generated and uniformly closed algebra $\mathcal{A}$ of bounded real valued-functions on $X$ that separates the points of $X$. 
Let
$\mathcal{E}$ be a sup-norm-separable, $\sigma$-finite and sup-norm-lower semicontinuous quadratic form on $\mathcal{A}$ on which normal contractions operate. Then the effective domain 
\[\mathcal{D}=\left\lbrace f\in\mathcal{A}: \mathcal{E}(f)<+\infty\right\rbrace\] 
of $\mathcal{E}$ is isomorphic to an algebra $\hat{\mathcal{D}}$ of continuous functions in the domain of a regular Dirichlet form $\hat{\mathcal{E}}$ on a locally compact separable metric space and $\hat{\mathcal{D}}$ contains a core for $\hat{\mathcal{E}}$. 
The isomorphism $f\mapsto \hat{f}$ from $\mathcal{D}$ onto $\hat{\mathcal{D}}$ preserves the algebraic structure, the supremum norm, and is also  'isometric' in the sense that $\hat{\mathcal{E}}(\hat{f})=\mathcal{E}(f)$, $f\in\mathcal{D}$.
\end{theorem}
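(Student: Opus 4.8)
The plan is to realise $\mathcal{A}$ as an algebra of continuous functions on a locally compact separable \emph{metric} space via Gelfand theory, to transport $\mathcal{E}$ to that space without change, and then to run the construction of \cite[Section 9]{H14} to close the transported form into a regular Dirichlet form. Concretely, I would first pass to the natural complexification $\mathcal{A}_{\mathbb{C}}=\mathcal{A}\oplus i\mathcal{A}$ with the supremum norm and complex conjugation as involution; as the uniform closure of a $*$-subalgebra of $C_b(X;\mathbb{C})$ it is a commutative $C^{*}$-algebra. Let $\Delta$ be its Gelfand spectrum and $f\mapsto\hat f$ the Gelfand transform, so that $\mathcal{A}_{\mathbb{C}}\cong C_{0}(\Delta)$ isometrically and $\Delta$ is locally compact Hausdorff ($\Delta$ compact iff $1\in\mathcal{A}$). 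Since $\mathcal{A}$ is countably generated and uniformly closed, hence separable, $C_{0}(\Delta)$ is separable and $\Delta$ is second countable; a second countable locally compact Hausdorff space is $\sigma$-compact and (by Urysohn metrization) metrizable, so $\Delta$ is a locally compact separable metric space. The transform restricts to an isometric, multiplicative, real-linear bijection of $\mathcal{A}$ onto the real-valued functions in $C_{0}(\Delta)$, and because a uniformly closed algebra of bounded functions is automatically stable under one- and several-variable normal contractions (uniformly approximate $F$ on compacta by polynomials vanishing at the origin), this bijection satisfies $\widehat{F(f_{1},\dots,f_{k})}=F(\hat f_{1},\dots,\hat f_{k})$. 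Finally, point separation of $\mathcal{A}$ together with $\sigma$-finiteness yields an injection $\iota:X\to\Delta$, $\iota(x)(f)=f(x)$, with dense image: otherwise Urysohn's lemma would produce a nonzero $g\in C_{c}(\Delta)$ vanishing on $\overline{\iota(X)}$, i.e. a nonzero $f\in\mathcal{A}$ vanishing identically on $X$.

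Next I would transport the form. Polarising $\mathcal{E}$ on $\mathcal{D}$ gives a real symmetric bilinear form. The set $\mathcal{D}$ is a subalgebra of $\mathcal{A}$, since stability under sums and under products of bounded functions follows from \eqref{E:normalcont} applied to the normal contractions $(x,y)\mapsto x+y$ and $(x,y)\mapsto M^{-1}\big((x\vee(-M))\wedge M\big)\big((y\vee(-M))\wedge M\big)$ with $M$ an upper bound for $|f|$ and $|g|$; and $\mathcal{D}$ is uniformly dense in $\mathcal{A}$ because the generators $f_{n}$ supplied by sup-norm-separability lie in $\mathcal{D}$ and generate $\mathcal{A}$. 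Setting $\hat{\mathcal{D}}:=\{\hat f:f\in\mathcal{D}\}$ and $\hat{\mathcal{E}}(\hat f,\hat g):=\mathcal{E}(f,g)$, the first paragraph shows that $\hat{\mathcal{E}}$ is a well-defined symmetric bilinear form on the subalgebra $\hat{\mathcal{D}}\subset C_{0}(\Delta)$, that $\hat{\mathcal{D}}$ is sup-norm dense in $C_{0}(\Delta)$ (hence in $C_{c}(\Delta)$), that normal contractions operate on $\hat{\mathcal{E}}$, that $\hat{\mathcal{E}}$ is sup-norm-lower semicontinuous on $\hat{\mathcal{D}}$, that it is $\sigma$-finite ($\hat\chi\geq 0$ with $\hat\chi>0$ on the dense set $\iota(X)$ and $\hat{\mathcal{E}}(\hat\chi)<\infty$), and that $f\mapsto\hat f$ is algebra- and sup-norm preserving and ``isometric'', $\hat{\mathcal{E}}(\hat f)=\mathcal{E}(f)$.

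At this point we are exactly in the setting of \cite[Section 9]{H14}: a $\sigma$-finite, sup-norm-lower semicontinuous bilinear form on which normal contractions operate, defined on a uniformly dense subalgebra of $C_{0}(\Delta)$ with $\Delta$ locally compact. Following that construction, one builds from the energy measures of the $\hat f_{n}$ a finite energy-dominant Radon measure and adds a finite measure of full support, obtaining a finite energy-dominant Radon measure $m$ on $\Delta$ of full support; with respect to $L^{2}(\Delta,m)$ the form $\hat{\mathcal{E}}(\cdot,\cdot)+\langle\cdot,\cdot\rangle_{L^{2}(\Delta,m)}$ on $\hat{\mathcal{D}}$ is closable (this is where sup-norm-lower semicontinuity is used), and its closure $(\hat{\mathcal{E}},\hat{\mathcal{F}})$ is a Dirichlet form because the unit contraction is a normal contraction and hence operates. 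Regularity holds since $\hat{\mathcal{D}}$ is by construction dense in $\hat{\mathcal{F}}$ and sup-norm dense in $C_{c}(\Delta)$; in the non-compact case one uses cutoff functions manufactured from $\hat\chi$ (available by $\sigma$-finiteness) to replace $\hat{\mathcal{D}}$ by a subalgebra of $\hat{\mathcal{D}}\cap C_{c}(\Delta)$ still dense in both senses, which is then the desired core contained in $\hat{\mathcal{D}}$. As $\Delta$ is a locally compact separable metric space, $(\hat{\mathcal{E}},\hat{\mathcal{F}})$ is a regular Dirichlet form of the asserted type and $\hat{\mathcal{D}}\subset\hat{\mathcal{F}}$ is the required algebra of continuous functions, isomorphic to $\mathcal{D}$ via $f\mapsto\hat f$.

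I expect the main obstacle to be this last step: establishing closability with respect to a suitably chosen full-support measure, and, in the non-compact case, verifying that a core can be located \emph{inside} $\hat{\mathcal{D}}\cap C_{c}(\Delta)$ rather than merely inside $\hat{\mathcal{F}}$, for which the $\sigma$-finiteness hypothesis and the accompanying cutoff estimates are essential — this is precisely the input imported from \cite{H14}. By contrast, the genuinely new ingredient here is the elementary observation that countable generation of $\mathcal{A}$ forces $\Delta$ to be second countable, hence metrizable, which is exactly what upgrades the conclusion to a regular Dirichlet form on a locally compact separable \emph{metric} space.
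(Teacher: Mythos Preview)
Your proposal is correct and follows essentially the same route as the paper: pass to the Gelfand spectrum $\Delta$ of $\mathcal{A}_{\mathbb{C}}$, use countable generation to make $\Delta$ a locally compact separable metric space, transfer $(\mathcal{E},\mathcal{D})$ to $(\hat{\mathcal{E}},\hat{\mathcal{D}})$ preserving sup-norm, algebra structure, normal contractions and sup-norm-lower semicontinuity, and then invoke \cite[Theorem~9.1]{H14} (adding a discrete measure if necessary to achieve full support) to obtain a regular Dirichlet form with core $\hat{\mathcal{D}}\cap C_c(\Delta)$. This is exactly the argument the paper carries out in Sections~\ref{S:Alg}--\ref{S:Clos}, culminating in Corollary~\ref{C:final}; your identification of the ``new ingredient'' (metrizability of $\Delta$ from separability of $\mathcal{A}$) and of the imported input from \cite{H14} and \cite{HKT} matches the paper's own emphasis.
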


Both Theorem \ref{T:main} and the results in \cite{H14} should be seen as continuations of and complements to an earlier study of Mokobodzki, \cite{Mok95}, where sup-norm-lower semicontinuity was employed to conclude the closability of quadratic forms on spaces of continuous functions over locally compact spaces. \\

A second aim of the present note is to provide approximations of Dirichlet forms in terms of discrete energy forms on finite graphs, and this approximation is in terms of a suitable 'spectral convergence'. We recall the definition of convergence in strong resolvent sense, \cite[Chapter VIII.7]{RS}. Usually it is formulated for complex Hilbert spaces, we consider it with respect to the natural complexification of $L^2(X,\mathcal{X},\mu)$. A sequence $(L^{(n)})_{n=1}^\infty$ of self-adjoint operators $L^{(n)}$ on $L^2(X,\mathcal{X},\mu)$ with resolvents $G_\lambda^{(n)}=(\lambda-L^{(n)})^{-1}$ \emph{converges in the strong resolvent sense} to a self-adjoint operator $L$ on $L^2(X,\mathcal{X},\mu)$ with resolvent $G_\lambda=(\lambda-L)^{-1}$ if for all $\lambda \in\mathbb{C}$ with $\Imag \lambda\neq 0$ the resolvent operators $G_\lambda^{(n)}$ converge to $G_\lambda$ in the strong operator topology. This condition is equivalent to requiring the convergence for some $\lambda \in\mathbb{C}$ with $\Imag \lambda\neq 0$, \cite[Theorem VIII.19]{RS}. In the case of non-positive definite self-adjoint operators $L^{(n)}$ and $L$ that generate strongly continuous contraction semigroups it is also equivalent to requiring the convergence of the resolvents $G_\lambda^{(n)}$ to $G_\lambda$ in the strong operator topology for some (and hence all) $\lambda>0$, see \cite[Theorem 1.3 in Chapter 8, Paragraph 1, Section 1, p. 427]{Kato}. This in turn is equivalent to the Mosco convergence of the associated quadratic forms, \cite[Definition 2.1.1 and Theorem 2.4.1]{Mosco94}. Recall that a sequence $(E^{(n)})_{n=1}^\infty$ of (possibly extended real valued) quadratic forms $E^{(n)}$ on $L^2(X,\mathcal{X},\mu)$ \emph{converges} to a 
quadratic form $E$ on $L^2(X,\mathcal{X},\mu)$ \emph{in the sense of Mosco} if

\begin{enumerate}
\item[(i)] for any sequence $(f_n)_{n=1}^\infty\subset L^2(X,\mathcal{X},\mu)$ converging to some $f$ weakly in $L^2(X,\mathcal{X},\mu)$ we have
\[E(f)\leq \liminf_n E^{(n)}(f_n)\]
and
\item[(ii)] for any $f\in L^2(X,\mathcal{X},\mu)$ there exists a sequence $(f_n)_{n=1}^\infty$ converging to $f$ strongly in $L^2(X,\mathcal{X},\mu)$ and such that
\[\limsup_n E^{(n)}(f_n)\leq E(f).\]
\end{enumerate} 

Again a main ingredient is a suitable notion of separability.

\begin{definition}
To a Dirichlet form $(\mathcal{E},\mathcal{F})$ on a space $L^2(X,\mathcal{X},\mu)$ over a
$\sigma$-finite measure space $(X,\mathcal{X},\mu)$ with a countably generated $\sigma$-algebra $\mathcal{X}$ we refer as a \emph{separable Dirichlet form}.
\end{definition}

\begin{remark}
Any regular Dirichlet form in the sense of \cite{FOT94} is separable.
\end{remark}

To a Dirichlet form which (in the sense of its Dirichlet space) is essentially isometrically isomorphic to a discrete Dirichlet ('energy') form on a finite graph we simply refer as a \emph{essentially discrete Dirichlet form}. 
Technically speaking our approximating forms are infinite dimensional,
but the generators have finite dimensional range (but typically
infinite dimensional kernel). Using this manner of speaking, our second result reads as follows. 

\begin{theorem}\label{T:main2}
Any separable Dirichlet form $(\mathcal{E},\mathcal{F})$ 
can be approximated in the Mosco
sense by a sequence of essentially discrete Dirichlet forms
(essentially isomorphic to that on finite weighted graphs) and the
corresponding generators approximate the generator of $(\mathcal{E},\mathcal{F})$ in the strong resolvent sense.
\end{theorem}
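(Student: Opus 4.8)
The plan is to build a filtration of the countably generated $\sigma$-algebra $\mathcal{X}$ by finite $\sigma$-algebras and to define the approximating forms by conditional-expectation (piecewise constant) Galerkin projection. Concretely, fix a sequence $\{A_k\}_{k=1}^\infty$ generating $\mathcal{X}$ and let $\mathcal{X}_n=\sigma(A_1,\dots,A_n)$; this is a finite $\sigma$-algebra, generated by the atoms of a finite partition $\{P_1^{(n)},\dots,P_{m(n)}^{(n)}\}$ of $X$. After a harmless exhaustion argument in the $\sigma$-finite case (restricting to an increasing sequence of sets of finite measure, as in the definition of $\sigma$-finite forms), we may assume $0<\mu(P_j^{(n)})<\infty$. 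Let $\Pi_n:L^2(X,\mathcal{X},\mu)\to L^2(X,\mathcal{X}_n,\mu)$ be the orthogonal projection, i.e. conditional expectation, whose range $V_n$ is the finite-dimensional space of functions constant on each atom; note $\Pi_n$ is a Markovian (unit contraction) projection and $V_n\subset V_{n+1}$ with $\bigcup_n V_n$ dense in $L^2$. Define the essentially discrete form
\begin{equation}\label{E:approxform}
\mathcal{E}^{(n)}(f)=\begin{cases}\mathcal{E}(\Pi_n f) & \text{if }\Pi_n f\in\mathcal{F},\\ +\infty & \text{otherwise,}\end{cases}
\end{equation}
with domain those $f$ for which $\Pi_n f\in\mathcal{F}$. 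Since $\mathcal{E}$ is a Dirichlet form and $\Pi_n$ is Markovian, $\mathcal{E}^{(n)}$ is again a (closed) Dirichlet form on $L^2(X,\mathcal{X},\mu)$; its Dirichlet space splits as $V_n$ (carrying the finite form $f\mapsto\mathcal{E}(f)$) plus the kernel $V_n^\perp$, so it is essentially isometrically isomorphic to an energy form on the finite weighted graph whose vertices are the atoms $P_j^{(n)}$, with vertex weights $\mu(P_j^{(n)})$ and edge conductances read off from $\mathcal{E}$ evaluated on indicator functions $\mathbf 1_{P_j^{(n)}}$; this is the essentially discrete form of the statement. The generator has range inside $V_n$ and infinite-dimensional kernel $V_n^\perp$, matching the remark preceding the theorem.

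The remaining work is to verify Mosco convergence $\mathcal{E}^{(n)}\to\mathcal{E}$. For the $\limsup$ condition (ii), given $f\in L^2$: if $f\notin\mathcal{F}$ then $\mathcal{E}(f)=+\infty$ and any strongly convergent sequence works, so take $f\in\mathcal{F}$ and set $f_n:=\Pi_n f$. Then $f_n\to f$ strongly in $L^2$ by the martingale convergence theorem (density of $\bigcup V_n$), and $\mathcal{E}^{(n)}(f_n)=\mathcal{E}(\Pi_n\Pi_n f)=\mathcal{E}(\Pi_n f)$. So it suffices to know $\Pi_n f\in\mathcal{F}$ for all $n$ with $\limsup_n\mathcal{E}(\Pi_n f)\le\mathcal{E}(f)$ — but this is exactly the statement that the Markovian projections $\Pi_n$ are contractions on the Dirichlet space, which holds because conditional expectation with respect to a sub-$\sigma$-algebra does not increase the energy of a function in the domain of a Dirichlet form. (This last fact — that $\mathcal{E}(\mathrm{E}[f\mid\mathcal{X}_n])\le\mathcal{E}(f)$ for $f\in\mathcal{F}$ — is standard; it follows for instance from writing $\Pi_n$ as an average of Markovian maps, or from the characterization of Dirichlet forms via their associated nonnegative semigroups and Jensen's inequality, and it is the analytic heart of the argument.) For the $\liminf$ condition (i), let $f_n\rightharpoonup f$ weakly in $L^2$; we may assume $\liminf_n\mathcal{E}^{(n)}(f_n)=:c<\infty$ and pass to a subsequence attaining it with all $\mathcal{E}^{(n)}(f_n)$ finite, so $g_n:=\Pi_n f_n\in\mathcal{F}$ and $\sup_n\mathcal{E}(g_n)<\infty$. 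One checks $g_n\rightharpoonup f$ weakly as well: for a test function $h\in\bigcup_m V_m$ we have $\langle g_n,h\rangle=\langle f_n,\Pi_n h\rangle=\langle f_n,h\rangle$ once $n\ge m$, and $\sup_n\|g_n\|_{L^2}\le\sup_n\|f_n\|_{L^2}<\infty$, so weak convergence on a dense set plus boundedness gives $g_n\rightharpoonup f$. Since the bilinear form $(\mathcal{E}_1,\mathcal{F})$ with $\mathcal{E}_1=\mathcal{E}+\|\cdot\|_{L^2}^2$ is a Hilbert space, the sequence $(g_n)$ is bounded in $\mathcal{F}$; extracting a further subsequence it converges weakly in $\mathcal{F}$ to some $g$, and weak convergence in $\mathcal{F}$ forces weak (indeed strong, on bounded sets, via compact embedding is not available, but weak suffices) convergence in $L^2$ to the same limit, whence $g=f\in\mathcal{F}$. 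Lower semicontinuity of the norm under weak convergence in the Hilbert space $\mathcal{F}$ then yields $\mathcal{E}(f)\le\liminf_n\mathcal{E}(g_n)=\liminf_n\mathcal{E}^{(n)}(f_n)=c$, as required.

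Finally, by the equivalences recalled before the theorem (\cite[Theorem 2.4.1]{Mosco94} together with \cite[Theorem 1.3, Ch.~8]{Kato} and \cite[Theorem VIII.19]{RS}), Mosco convergence of $(\mathcal{E}^{(n)})$ to $(\mathcal{E},\mathcal{F})$ is equivalent to strong resolvent convergence of the associated generators, which gives the second assertion.

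The step I expect to be the main obstacle is the energy-contraction property $\mathcal{E}(\mathrm{E}[f\mid\mathcal{X}_n])\le\mathcal{E}(f)$ used in condition (ii): it is the one place where the Markovian (Dirichlet) structure is genuinely needed rather than just Hilbert-space softness, and care is required because $\Pi_n$ is a projection onto an $L^2$-subspace, not obviously the transition operator of a Markov kernel — one must exhibit it as (a limit of) an average of such operators, or argue via the Beurling–Deny / Jensen route, to conclude it contracts the Dirichlet form. A secondary technical point is the reduction from a general $\sigma$-finite measure to the situation where each partition atom has finite positive measure, which requires combining the filtration with an exhaustion $X=\bigcup_\ell X_\ell$, $\mu(X_\ell)<\infty$, and checking this does not disturb the Mosco limit.
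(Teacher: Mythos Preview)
Your proof has a genuine gap, and it is precisely the step you flagged as the main obstacle. The assertion that conditional expectation contracts the Dirichlet energy, i.e.\ that $\Pi_n f\in\mathcal{F}$ with $\mathcal{E}(\Pi_n f)\le\mathcal{E}(f)$ for $f\in\mathcal{F}$, is \emph{false} for general Dirichlet forms. Take $\mathcal{E}(f)=\int_0^1|f'|^2\,dx$ on $L^2([0,1])$ with $\mathcal{F}=H^1(0,1)$, and let $\mathcal{X}_n$ be generated by a partition into subintervals. Then $\Pi_n f$ is a step function, which lies in $H^1$ only if it is constant. Hence for generic $f\in H^1$ your approximating form gives $\mathcal{E}^{(n)}(f)=+\infty$, the $\limsup$ condition (ii) fails, and the forms $\mathcal{E}^{(n)}$ are not the nontrivial graph energies you want. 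The heuristics you offer (averaging Markovian maps, Jensen via the semigroup) do not rescue this: a Markovian $L^2$-contraction need not map $\mathcal{F}$ into itself or contract $\mathcal{E}$ unless it interacts well with the specific semigroup $(P_t)$, which an arbitrary conditional expectation does not.

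The paper circumvents exactly this problem by inserting a \emph{semigroup approximation first}: one replaces $\mathcal{E}$ by the bounded forms $\mathcal{E}^{(n)}(f)=2^n\langle f-P_{2^{-n}}f,f\rangle$, defined on all of $L^2$, and only \emph{then} applies finite-dimensional projections (onto spans of an orthonormal basis, then restriction to sets of finite measure, then onto piecewise constants along level-set partitions). Because each $\mathcal{E}^{(n)}$ is everywhere finite with $\mathcal{E}^{(n)}(f)\le 2^n\|f\|_{L^2}^2$, projecting causes no domain trouble and the $\limsup$ condition follows from $L^2$-continuity of the bounded form rather than from any energy-contraction property of $\Pi_n$. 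The result is then obtained as an iterated limit of four successive Mosco approximations. Your direct one-step Galerkin scheme would work if the form were already bounded; for a general (unbounded) Dirichlet form it does not, and the semigroup step is not a technicality but the device that makes the projection argument legitimate.
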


Our paper is organized as follows. 
We will first review some basics on algebras of functions and the Gelfand transform in Section \ref{S:Alg}, then consider quadratic forms in Section \ref{S:Quad} and finally, employ results from \cite{H14} to conclude the existence of the regular Dirichlet form on the Gelfand spectrum in Corollary \ref{C:final} of Section \ref{S:Clos}, what proves Theorem \ref{T:main}. To prove Theorem \ref{T:main2} we discuss the isomorphy of Dirichlet forms on finitely generated measure spaces to graph energies in Section \ref{S:finite}, semigroup approximation is Section \ref{S:sgapprox} and further approximations in terms of Galerkin schemes, $\sigma$-finiteness and level sets of base elements in Section \ref{S:Galerkin}.

\subsection*{Acknowledgments}
The second author thanks Robert Strichartz for raising the question about approximating 
arbitrary Laplacians by graph Laplacians. 

\section[Algebras of functions]{Algebras of functions}\label{S:Alg}

We review some folklore facts about algebras of functions and associated locally compact spaces. Let $X$ be a nonempty set. Given a uniformly closed and countably generated algebra $\mathcal{A}$ of bounded real-valued functions on $X$, let $\mathcal{A}_{\mathbb{C}}$ denote its natural complexification and $\Delta$  the \emph{Gelfand spectrum} of $\mathcal{A}_{\mathbb{C}}$, i.e. the space of nonzero complex valued multiplicative linear functionals on $\mathcal{A}_{\mathbb{C}}$, \cite{Blackadar, Kaniuth}. The space $\Delta$ is a locally compact Hausdorff space and $\mathcal{A}$ being separable, the space $\Delta$ is second countable and metrizable.  We assume that $\mathcal{A}$ separates the points of $X$. Then the image $\iota(X)$ of $X$ under $\iota:X\to\Delta$ given by $\iota(x)(f):=f(x)$, is densely embedded in $\Delta$. 

\begin{remark}
If $X$ itself is a locally compact Hausdorff space and $\mathcal{A}=C_0(X)$ then $\iota$ is a homeomorphism from $X$ onto $\Delta$. If $X$ is a locally compact separable metric space, then $C_0(X)$ is separable, hence $\Delta$ is metrizable.
\end{remark}

We collect these well known arguments in the following Lemma.

\begin{lemma}\label{L:algebra}
Let $\mathcal{A}$ be a uniformly closed and countably generated algebra of bounded real valued functions on $X$. Assume that $\mathcal{A}$ separates the points of $X$. Then the Gelfand spectrum $\Delta$ of the natural complexification of $\mathcal{A}$ is a locally compact separable metric space, and $X$ may be regarded as a dense subset.
\end{lemma}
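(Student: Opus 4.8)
The plan is to establish the three asserted properties of the Gelfand spectrum $\Delta$ in turn: that it is locally compact Hausdorff, that it is second countable and metrizable (hence a separable metric space), and that $X$ embeds densely via $\iota$. These are standard facts from the Gelfand theory of commutative $C^*$-algebras, so the work is mostly in assembling and citing the right structure rather than proving anything deep.

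First I would observe that $\mathcal{A}_{\mathbb{C}}$, the natural complexification of $\mathcal{A}$, is a commutative $C^*$-algebra under the supremum norm with complex conjugation as involution; completeness with respect to $\left\|\cdot\right\|_{\sup}$ is given by hypothesis, and the $C^*$-identity $\left\|f\bar f\right\|_{\sup}=\left\|f\right\|_{\sup}^2$ is immediate for bounded functions. Since $\mathcal{A}$ need not contain the constants, $\mathcal{A}_{\mathbb{C}}$ is in general non-unital, and the Gelfand theory then identifies it $C^*$-isomorphically with $C_0(\Delta)$, where $\Delta$ is the locally compact Hausdorff space of nonzero multiplicative linear functionals on $\mathcal{A}_{\mathbb{C}}$ equipped with the weak-$*$ topology \cite{Blackadar, Kaniuth}. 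This gives local compactness and the Hausdorff property directly.

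Next I would derive metrizability and second countability from the separability of $\mathcal{A}$. Because $\mathcal{A}$ is countably generated and uniformly closed it is separable as a Banach space, and hence so is $\mathcal{A}_{\mathbb{C}}$; choosing a countable dense set $\left\lbrace g_k\right\rbrace_k$ in the unit ball of $\mathcal{A}_{\mathbb{C}}$, the functions $\varphi\mapsto\varphi(g_k)$ separate points of $\Delta$ and the metric $d(\varphi,\psi):=\sum_k 2^{-k}|\varphi(g_k)-\psi(g_k)|$ induces the weak-$*$ topology on the (locally compact, hence $\sigma$-compact) spectrum. Thus $\Delta$ is a separable metric space. Finally, the embedding $\iota(x)(f):=f(x)$ is well defined since point evaluations are nonzero multiplicative functionals (using that $\mathcal{A}$ separates points, so distinct $x$ give distinct functionals), and its image is dense: if $\iota(X)$ failed to be dense, Urysohn's lemma on the locally compact Hausdorff space $\Delta$ would produce a nonzero $h\in C_0(\Delta)$ vanishing on $\iota(X)$, contradicting that the corresponding function in $\mathcal{A}_{\mathbb{C}}$ vanishes identically on $X$.

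The main obstacle, such as it is, lies in the non-unital bookkeeping: one must be careful that $\mathcal{A}_{\mathbb{C}}$ may lack a unit, so that $\Delta$ is genuinely only locally compact and the isomorphism is with $C_0(\Delta)$ rather than $C(\Delta)$, and that the density argument uses the $C_0$-version of Urysohn's lemma rather than a compact-space statement. Everything else is a routine transcription of the commutative Gelfand--Naimark theorem together with the elementary fact that separability of a Banach algebra forces metrizability of the weak-$*$ topology on the (locally compact) spectrum.
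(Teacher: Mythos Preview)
Your argument is correct and follows the same route as the paper, which does not give a formal proof but simply collects the standard Gelfand--Naimark facts (local compactness of $\Delta$, metrizability from separability of $\mathcal{A}_{\mathbb{C}}$, and density of $\iota(X)$) and cites \cite{Blackadar, Kaniuth}; your write-up supplies the details the paper omits, in particular the Urysohn density argument and an explicit metric, in the same spirit as the remark following the lemma. One small slip: the parenthetical ``locally compact, hence $\sigma$-compact'' is not a valid implication in general, but it is harmless here since you do not actually use $\sigma$-compactness---metrizability of the weak-$\ast$ topology on $\Delta$ follows directly from $\Delta$ sitting inside the (compact, metrizable) dual unit ball of the separable Banach space $\mathcal{A}_{\mathbb{C}}$.
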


\begin{remark}
The topology of $\Delta$ is compatible with the uniform structure determined by the sets of form $U_{F,\varepsilon}=\left\lbrace (x,y)\in \Delta\times \Delta: |f(x)-f(y)|<\varepsilon, \ \text{ for all $f\in F$}\right\rbrace$,
where $\varepsilon>0$ and $F$ ranges over all finite subfamilies of functions from $\mathcal{A}_\mathbb{C}$. See \cite[Chapter II, Section 1.1]{Bourbaki}. If $\left\lbrace f_n\right\rbrace_{n=1}^\infty$ is a sequence 
of bounded real valued functions that generates $\mathcal{A}$ then since $\mathcal{A}$ separates the point of $X$ also $\left\lbrace f_n\right\rbrace_{n=1}^\infty$
separates points. Let $\mathcal{P}$ denote the collection of polynomials in several variables of the functions $\left\lbrace f_n\right\rbrace_{n=1}^\infty$ and having coefficients with rational real and imaginary parts. The space $\mathcal{P}$ is a countable and uniformly dense subalgebra of $\mathcal{A}_{\mathbb{C}}$. The functions
$\varrho_F(x,y):=\sup\left\lbrace |f(x)-f(y)|: f\in F\right\rbrace$, $x,y\in \Delta$,
where $F$ ranges over all finite subfamilies of functions from $\mathcal{P}$, provide a countable family of pseudometrics on $\Delta$ that generate this uniform structure. Now 
\cite[Chapter IX, Section 2.4, Corollary 1 and Section 1.4, Proposition 2]{Bourbaki} give another proof of the known fact that $\Delta$ is metrizable. A straightforward choice for a possible metric is 
\[\varrho(x,y):= \sum_{i=1}^\infty 2^{-i}\frac{|f_i(x)-f_i(y)|}{1+|f_i(x)-f_i(y)|}, \ \ x,y\in \Delta.\]
\end{remark}

For any $f\in\mathcal{A}_{\mathbb{C}}$ the \emph{Gelfand transform} $\hat{f}:\Delta\to\mathbb{C}$ of $f$ is defined by $\hat{f}(\varphi):=\varphi(f)$, $\varphi\in\Delta$. According to the Gelfand representation theorem the map $f\mapsto \hat{f}$ defines an $^\ast$-isomorphism from $\mathcal{A}_\mathbb{C}$ onto the algebra $C_{\mathbb{C},0}(\Delta)$ of complex valued continuous functions on $\Delta$ that vanish at infinity. For more background see for instance \cite{Blackadar} or \cite{Kaniuth}.

\begin{remark}
If $X$ is a locally compact Hausdorff space, $A=C_0(X)$, and we identify the spaces $X$ and $\Delta$ under $\iota$, then $f\mapsto \hat{f}$ is the identity mapping.
\end{remark}

\section[Quadratic forms]{Quadratic forms}\label{S:Quad}

We consider quadratic forms on $\mathcal{A}$ and see how to transform them into quadratic forms on spaces of continuous functions on $\Delta$. 

Let $\mathcal{A}$ be as in Lemma \ref{L:algebra}, i.e. a uniformly closed and countably generated algebra of bounded real valued functions on $X$, which is point separating. Assume that we are given a (extended real valued) quadratic form
\[\mathcal{E}:\mathcal{A}\to [0,+\infty]\]
and write 
\[\mathcal{D}:=\left\lbrace f\in\mathcal{A}: \mathcal{E}(f)<+\infty\right\rbrace\]
for its effective domain. On $\mathcal{D}$ we can define a (real-valued) bilinear form $(\mathcal{E},\mathcal{D})$ by polarization,
\[\mathcal{E}(f,g):=\frac14\left(\mathcal{E}(f+g)-\mathcal{E}(f-g)\right), \ \ f,g\in\mathcal{D}.\]
If normal contractions operate on $\mathcal{E}$ then the effective domain $\mathcal{D}$ is stable under normal contractions, i.e. if $F:\mathbb{R}^k\to\mathbb{R}$ is a normal contraction and $f_1,...,f_k \in\mathcal{D}$ then we have $F(f_1,..., f_k)\in\mathcal{D}$. In particular,
\[\mathcal{E}(fg)^{1/2}\leq \left\|f\right\|_{\sup}\mathcal{E}(g)^{1/2}+\left\|g\right\|_{\sup}\mathcal{E}(f)^{1/2}\]
for all $f,g\in \mathcal{A}$, and $\mathcal{D}$ is an algebra under pointwise multiplication. See the proof of \cite[Corollary I.3.3.2]{BH91}.

If in addition there is a sequence $\left\lbrace f_n\right\rbrace_{n=1}^\infty$ generating $\mathcal{A}$ (and therefore point separating) such that $\mathcal{E}(f_n)<+\infty$ for all its members $f_n$, then $\mathcal{D}$ is a uniformly dense subalgebra of  $\mathcal{A}$ and the algebra
\[\hat{\mathcal{D}}:=\left\lbrace \hat{f}\in C(\Delta):f\in\mathcal{D}\right\rbrace\] 
is uniformly dense in the subalgebra $C_0(\Delta)$ of real valued continuous functions on $\Delta$ vanishing at infinity. By
\[\hat{\mathcal{E}}(\hat{f},\hat{g}):=\mathcal{E}(f,g), \ \ \hat{f},\hat{g}\in\hat{\mathcal{D}}\]
we can define a bilinear form $\hat{\mathcal{E}}$ on $\hat{\mathcal{D}}$ (the \emph{transferred form}). See \cite[Section 10]{H14} or \cite{HKT} for further information.

\section[Sup-norm-lower semicontinuity and closability]{Sup-norm-lower semicontinuity and closability}\label{S:Clos}

We employ the notions of sup-norm-lower semicontinuity and sup-norm closability to deduce the closability and the contractivity for the transferred form.

\begin{definition}
Let $X$ be a nonempty set and $D$ a space of bounded real valued functions on $X$. A (real-valued) bilinear form $(E,D)$ is called \emph{sup-norm-closable} if for any sequence $(f_n)_n\subset D$ which is $\mathcal{E}$-Cauchy and such that $\lim_n \left\|f_n\right\|_{\sup}=0$ we have
\[\lim_n E(f_n)=0.\]
\end{definition}

Now let $\mathcal{A}$, $\mathcal{E}$, $\hat{\mathcal{E}}$, $\mathcal{D}$ and $\hat{\mathcal{D}}$ be as in the preceding section. The following is an immediate consequence of \cite[Corollary 10.1]{H14}.

\begin{lemma}\label{L:supnormclos}
Suppose $\mathcal{E}$ is sup-norm-lower semicontinuous on $\mathcal{A}$. Then both $(\mathcal{E},\mathcal{D})$ and $(\hat{\mathcal{E}},\hat{\mathcal{D}})$ are sup-norm-closable. Moreover, $\hat{\mathcal{E}}$ is sup-norm-lower semicontinuous on $\hat{\mathcal{D}}$. 
\end{lemma}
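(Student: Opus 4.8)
The plan is to reduce both assertions of Lemma \ref{L:supnormclos} to \cite[Corollary 10.1]{H14}, which establishes exactly this kind of statement for sup-norm-lower semicontinuous quadratic forms on uniformly closed algebras of bounded functions. The key observation is that sup-norm closability and sup-norm-lower semicontinuity are \emph{transported} under the Gelfand isomorphism $f\mapsto\hat f$, so that verifying the property on $\mathcal{D}$ and then invoking the isometry is enough.

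First I would record that $(\mathcal{E},\mathcal{D})$ is sup-norm-closable directly from the hypothesis: if $(f_n)_n\subset\mathcal{D}$ is $\mathcal{E}$-Cauchy with $\left\|f_n\right\|_{\sup}\to 0$, then for fixed $m$ the sequence $(f_n-f_m)_n$ has uniform limit $-f_m$ in $\mathcal{A}$ (since $\mathcal{A}$ is uniformly closed and $f_m\in\mathcal{D}\subset\mathcal{A}$), and sup-norm-lower semicontinuity on $\mathcal{A}$ gives $\mathcal{E}(f_m)=\mathcal{E}(-f_m)\leq\liminf_n\mathcal{E}(f_n-f_m)$. Letting $m\to\infty$ and using the $\mathcal{E}$-Cauchy property forces $\lim_m\mathcal{E}(f_m)=0$; this is precisely the content of \cite[Corollary 10.1]{H14}. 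Next, to pass to $(\hat{\mathcal{E}},\hat{\mathcal{D}})$, I would use that $f\mapsto\hat f$ is an algebra isomorphism from $\mathcal{D}$ onto $\hat{\mathcal{D}}$ preserving the supremum norm (the Gelfand transform is isometric, as recalled in Section \ref{S:Alg}) and satisfying $\hat{\mathcal{E}}(\hat f,\hat g)=\mathcal{E}(f,g)$ by definition of the transferred form. Consequently a sequence $(\hat f_n)_n\subset\hat{\mathcal{D}}$ is $\hat{\mathcal{E}}$-Cauchy with $\left\|\hat f_n\right\|_{\sup}\to 0$ if and only if the corresponding $(f_n)_n\subset\mathcal{D}$ is $\mathcal{E}$-Cauchy with $\left\|f_n\right\|_{\sup}\to 0$, and $\hat{\mathcal{E}}(\hat f_n)=\mathcal{E}(f_n)$; sup-norm closability of $(\mathcal{E},\mathcal{D})$ therefore transfers verbatim.

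For the final claim, that $\hat{\mathcal{E}}$ is sup-norm-lower semicontinuous on $\hat{\mathcal{D}}$, the mild subtlety is that this notion as stated in the first definition of Section 1 requires the ambient function space to be uniformly closed, whereas $\hat{\mathcal{D}}$ need not be: one only knows $\hat{\mathcal{D}}$ is uniformly dense in $C_0(\Delta)$. I would handle this by reading sup-norm-lower semicontinuity for $(\hat{\mathcal{E}},\hat{\mathcal{D}})$ as the statement that for every sequence $(\hat f_n)_n\subset\hat{\mathcal{D}}$ with uniform limit $\hat f\in\hat{\mathcal{D}}$ one has $\hat{\mathcal{E}}(\hat f)\leq\liminf_n\hat{\mathcal{E}}(\hat f_n)$; pulling back through the isometric isomorphism, this is equivalent to: for every $(f_n)_n\subset\mathcal{D}$ with uniform limit $f\in\mathcal{D}\subset\mathcal{A}$, $\mathcal{E}(f)\leq\liminf_n\mathcal{E}(f_n)$, which is immediate from sup-norm-lower semicontinuity of $\mathcal{E}$ on $\mathcal{A}$ (indeed a special case, since we are only testing against limits that happen to lie in $\mathcal{D}$). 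So no new input beyond the hypothesis and the isometry is needed. The main obstacle, such as it is, is purely bookkeeping: making sure the various ``sup-norm'' notions are applied in the correct ambient space and that the Gelfand isomorphism genuinely respects all three structures (algebra, sup-norm, and $\mathcal{E}$) simultaneously, all of which is already in place from Sections \ref{S:Alg} and \ref{S:Quad}; the analytic heart is entirely absorbed into \cite[Corollary 10.1]{H14}.
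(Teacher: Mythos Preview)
Your proposal is correct and follows exactly the paper's approach: the paper simply states that the lemma ``is an immediate consequence of \cite[Corollary 10.1]{H14}'', and you have correctly unpacked this by deriving sup-norm closability of $(\mathcal{E},\mathcal{D})$ from sup-norm-lower semicontinuity (which is what that corollary provides) and then transferring all properties to $(\hat{\mathcal{E}},\hat{\mathcal{D}})$ via the isometric Gelfand isomorphism and the definition $\hat{\mathcal{E}}(\hat f)=\mathcal{E}(f)$. Your bookkeeping remark about interpreting sup-norm-lower semicontinuity on the not-necessarily-closed space $\hat{\mathcal{D}}$ is apt and harmless.
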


In the presence of sup-norm-lower semicontinuity (resp. sup-norm-closability), contractivity properties carry over. 

\begin{lemma}
Suppose $\mathcal{E}$ is sup-norm-lower semicontinuous on $\mathcal{A}$ and that
normal contractions operate on $\mathcal{E}$. Then normal contractions operate on $(\hat{\mathcal{E}},\hat{\mathcal{D}})$, i.e. (\ref{E:normalcont}) holds for all normal contractions $F$ but with 
$\hat{\mathcal{E}}$ and $\hat{f}_1,...,\hat{f}_k\in\hat{\mathcal{D}}$ in place of $\mathcal{E}$ and $f_1,...,f_k\in\mathcal{D}$. 
\end{lemma}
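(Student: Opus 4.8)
The plan is to transfer the normal-contraction inequality \eqref{E:normalcont} from $\mathcal{E}$ on $\mathcal{D}$ to $\hat{\mathcal{E}}$ on $\hat{\mathcal{D}}$ by exploiting the fact that the isomorphism $f\mapsto\hat f$ intertwines the two forms and preserves the supremum norm and the pointwise algebraic operations, and then upgrading from the (uniformly dense) subalgebra $\hat{\mathcal{D}}$ to all of $\hat{\mathcal{D}}$ using the sup-norm-lower semicontinuity provided by Lemma \ref{L:supnormclos}. The subtlety, and the reason the statement is not completely immediate, is that a normal contraction $F$ need not map $\hat{\mathcal{D}}$ into the image of $\mathcal{D}$ in an obvious algebraic way: composition with $F$ is a non-polynomial operation, so $F(\hat f_1,\dots,\hat f_k)$ is a priori only a continuous function on $\Delta$, and one must show it actually lies in $\hat{\mathcal{D}}$ and that the claimed inequality holds for it.

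First I would treat the case where $\hat f_1,\dots,\hat f_k$ lie in $\hat{\mathcal{D}}$, i.e.\ are Gelfand transforms of $f_1,\dots,f_k\in\mathcal{D}$. Since normal contractions operate on $\mathcal{E}$, Section \ref{S:Quad} tells us $F(f_1,\dots,f_k)\in\mathcal{D}$ and $\mathcal{E}(F(f_1,\dots,f_k))^{1/2}\le\sum_i\mathcal{E}(f_i)^{1/2}$. Applying the Gelfand transform and using that it is an algebra isomorphism preserving the sup norm, one checks $\widehat{F(f_1,\dots,f_k)}=F(\hat f_1,\dots,\hat f_k)$ pointwise on $\iota(X)$ — both sides are continuous on $\Delta$ and $\iota(X)$ is dense (Lemma \ref{L:algebra}), so they agree on all of $\Delta$; here one uses that $F$ is (Lipschitz, hence) continuous and that the identity is a polynomial-type identity when $F$ is a polynomial, passed to general $F$ by uniform approximation on compacta together with $F(0)=0$ so nothing escapes to infinity. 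Thus $F(\hat f_1,\dots,\hat f_k)\in\hat{\mathcal{D}}$ and, by the isometry $\hat{\mathcal{E}}(\hat g)=\mathcal{E}(g)$, the inequality \eqref{E:normalcont} holds verbatim for $\hat{\mathcal{E}}$ on $\hat{\mathcal{D}}$.

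Next I would pass from $\hat{\mathcal{D}}$ to all of $\hat{\mathcal{D}}$. Given $\hat g_1,\dots,\hat g_k\in\hat{\mathcal{D}}$ of finite $\hat{\mathcal{E}}$-energy, by density of $\hat{\mathcal{D}}$ in the Dirichlet space and in $C_0(\Delta)$ one approximates each $\hat g_i$ in the $\hat{\mathcal{E}}_1$-norm by a sequence $(\hat g_i^{(n)})_n\subset\hat{\mathcal{D}}$; a standard truncation/convolution argument or the explicit core construction from \cite{H14} also yields uniform convergence. Apply the subalgebra case to get $F(\hat g_1^{(n)},\dots,\hat g_k^{(n)})\in\hat{\mathcal{D}}\subset\hat{\mathcal{D}}$ with $\hat{\mathcal{E}}(F(\hat g_1^{(n)},\dots,\hat g_k^{(n)}))^{1/2}\le\sum_i\hat{\mathcal{E}}(\hat g_i^{(n)})^{1/2}$. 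Since $F$ is a normal contraction, $F(\hat g_1^{(n)},\dots,\hat g_k^{(n)})\to F(\hat g_1,\dots,\hat g_k)$ uniformly as $n\to\infty$; by the sup-norm-lower semicontinuity of $\hat{\mathcal{E}}$ on $\hat{\mathcal{D}}$ from Lemma \ref{L:supnormclos}, together with $\sup_n\hat{\mathcal{E}}(\hat g_i^{(n)})<+\infty$, we conclude $F(\hat g_1,\dots,\hat g_k)\in\hat{\mathcal{D}}$ and
\[
\hat{\mathcal{E}}(F(\hat g_1,\dots,\hat g_k))^{1/2}\le\liminf_n\hat{\mathcal{E}}(F(\hat g_1^{(n)},\dots,\hat g_k^{(n)}))^{1/2}\le\liminf_n\sum_{i=1}^k\hat{\mathcal{E}}(\hat g_i^{(n)})^{1/2}=\sum_{i=1}^k\hat{\mathcal{E}}(\hat g_i)^{1/2},
\]
which is exactly \eqref{E:normalcont} for $\hat{\mathcal{E}}$.

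The main obstacle I expect is the bookkeeping in the first step: verifying that $\widehat{F(f_1,\dots,f_k)}$ really equals $F(\hat f_1,\dots,\hat f_k)$ as functions on $\Delta$ when $F$ is only Lipschitz rather than polynomial. The clean way around this is to note that the Gelfand transform is a $C^\ast$-algebra isomorphism onto $C_{\mathbb{C},0}(\Delta)$, so it commutes with the continuous functional calculus; since $F$, being a normal contraction vanishing at the origin, induces such a continuous operation on $C_0(\Delta)$, the identity follows, and everything else is a routine passage to the limit. Apart from that point the argument is essentially a transport of structure plus one application of lower semicontinuity.
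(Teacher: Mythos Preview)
Your first step is correct and is essentially the paper's own argument: the paper simply notes that a normal contraction $F$ can be approximated by polynomials $P_n$ on compacta (it says in $C^1$-norm, but uniform suffices here) and that $P_n(\hat f_1,\dots,\hat f_k)=(P_n(f_1,\dots,f_k))^\wedge$ because the Gelfand transform is an algebra homomorphism; passing to the uniform limit gives $\widehat{F(f_1,\dots,f_k)}=F(\hat f_1,\dots,\hat f_k)$, and then the inequality transfers by the isometry $\hat{\mathcal{E}}(\hat g)=\mathcal{E}(g)$. Your density-of-$\iota(X)$ remark and functional-calculus remark are alternative ways to see the same identity.

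Your second step, however, is superfluous and reveals a misreading of the setup. In this paper $\hat{\mathcal{D}}$ is \emph{defined} (Section~\ref{S:Quad}) as $\{\hat f:f\in\mathcal{D}\}$, i.e.\ exactly the image of $\mathcal{D}$ under the Gelfand transform; it is not the full Dirichlet domain $\hat{\mathcal{F}}$ of Corollary~\ref{C:final}. So ``passing from $\hat{\mathcal{D}}$ to all of $\hat{\mathcal{D}}$'' is vacuous: every $\hat g_i\in\hat{\mathcal{D}}$ already equals $\hat f_i$ for some $f_i\in\mathcal{D}$, and your first step is the whole proof. The approximation machinery you invoke (density in the Dirichlet space, sup-norm lower semicontinuity of $\hat{\mathcal{E}}$) would be relevant if the lemma claimed that normal contractions operate on the \emph{closure} $(\hat{\mathcal{E}},\hat{\mathcal{F}})$, but that is not what is being asserted here.
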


The lemma follows by noting that on compact subsets of $\mathbb{R}^k$ a normal contraction $F:\mathbb{R}^k\to\mathbb{R}$ can by approximated in $C^1$-norm by polynomials $P_n$ and we have $P_n(\hat{f}_1,...,\hat{f}_k)=(P_n(f_1,...,f_k))^\wedge$.

A second consequence of sup-norm-lower semicontinuity (resp. sup-norm-clo\-sa\-bi\-lity) is that the transferred form extends to a Dirichlet form. In the present separable setup we can make sure this extension is a regular Dirichlet form in the sense of Fukushima et al, \cite{FOT94}. The result follows from \cite[Section 9, in particular Theorem 9.1]{H14}. In a topological setup an earlier version of this theorem was given by Mokobodzki, \cite{Mok95}.

\begin{corollary}\label{C:final}
Let $\mathcal{A}$ be a countably generated uniformly closed algebra $\mathcal{A}$ of bounded real valued-functions on $X$ that vanishes nowhere and separates the points of $X$. Let $\mathcal{E}$ be a sup-norm-separable, $\sigma$-finite and sup-norm-lower semicontinuous quadratic form on $\mathcal{A}$ on which normal contractions operate. 
Then there exists a finite Radon measure $\hat{m}$ on $\Delta$ with full support such that $(\hat{\mathcal{E}},\hat{\mathcal{D}})$ extends to a regular Dirichlet form $(\hat{\mathcal{E}},\hat{\mathcal{F}})$ on $L^2(\Delta,\hat{m})$, and  
\[\hat{\mathcal{D}}_c:=\left\lbrace \hat{f}\in C_c(\Delta): f\in\mathcal{D}\right\rbrace\subset\hat{\mathcal{D}}\] 
is a core for $(\hat{\mathcal{E}},\hat{\mathcal{F}})$.
\end{corollary}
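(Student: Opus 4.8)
\textbf{Proof proposal for Corollary \ref{C:final}.}

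The plan is to reduce everything to the machinery already built in \cite[Section 9]{H14}, whose input is precisely a sup-norm-closable, sup-norm-lower semicontinuous bilinear form on a uniformly dense subalgebra of $C_0$ of some locally compact separable metric space, on which normal contractions operate and which is $\sigma$-finite in the sense that the algebra contains a strictly positive function of finite energy. So the first step is to collect the right objects. By Lemma \ref{L:algebra} the Gelfand spectrum $\Delta$ of $\mathcal{A}_\mathbb{C}$ is a locally compact separable metric space into which $X$ embeds densely; since $\mathcal{A}$ vanishes nowhere, $\mathcal{A}_\mathbb{C}=C_{\mathbb{C},0}(\Delta)$ via the Gelfand transform, so $C_0(\Delta)$ is the real part. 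As explained in Section \ref{S:Quad}, sup-norm-separability guarantees that $\mathcal{D}$ is a uniformly dense subalgebra of $\mathcal{A}$, hence $\hat{\mathcal{D}}$ is a uniformly dense subalgebra of $C_0(\Delta)$, and $\hat{\mathcal{E}}$ is a well-defined bilinear form on it.

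The second step is to verify the hypotheses of \cite[Theorem 9.1]{H14} for $(\hat{\mathcal{E}},\hat{\mathcal{D}})$ on $\Delta$. Sup-norm-closability of $(\hat{\mathcal{E}},\hat{\mathcal{D}})$ and sup-norm-lower semicontinuity of $\hat{\mathcal{E}}$ on $\hat{\mathcal{D}}$ are exactly Lemma \ref{L:supnormclos}. That normal contractions operate on $(\hat{\mathcal{E}},\hat{\mathcal{D}})$ is the preceding unnumbered lemma. For $\sigma$-finiteness, pick the strictly positive $\chi\in\mathcal{A}$ with $\mathcal{E}(\chi)<+\infty$ furnished by the hypothesis; since the Gelfand transform is an isometric $\ast$-isomorphism preserving positivity and the supremum norm, $\hat{\chi}\in\hat{\mathcal{D}}$ is strictly positive on $\Delta$ (here one uses that $\chi$ bounded below on $X$ together with density of $\iota(X)$ forces $\hat\chi$ bounded below on $\Delta$, or more simply that $\hat\chi$ has no zeros because $\chi$ is invertible in the unitization), and $\hat{\mathcal{E}}(\hat{\chi})=\mathcal{E}(\chi)<+\infty$. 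Thus \cite[Theorem 9.1]{H14} applies and yields a finite Radon measure $\hat{m}$ on $\Delta$ with full support together with the closure $(\hat{\mathcal{E}},\hat{\mathcal{F}})$ of $(\hat{\mathcal{E}},\hat{\mathcal{D}})$ in $L^2(\Delta,\hat m)$, which is a Dirichlet form; regularity and the fact that $\hat{\mathcal{D}}_c=\{\hat f\in C_c(\Delta):f\in\mathcal{D}\}$ is a core are also part of the conclusion of that theorem, the point being that $\hat{\mathcal{D}}$ is uniformly dense in $C_0(\Delta)$ and can be cut off against compactly supported functions using the algebra structure and the operation of normal contractions.

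The main obstacle, and the only place requiring genuine care rather than bookkeeping, is the construction of the reference measure $\hat m$ with full support and the verification that the abstract $L^2$-closure stays inside the continuous functions enough to be regular; but this is precisely what is done in \cite[Section 9]{H14}, where $\hat m$ is built so that the $\|\cdot\|_{\sup}$-topology and the $L^2(\hat m)$-topology interact correctly on $\hat{\mathcal{D}}$, using separability of $\hat{\mathcal{D}}$ to choose countably many functionals and a Daniell-type or Riesz-representation argument. One secondary subtlety is the passage from the possibly infinite ``mass'' of the original data to a \emph{finite} measure $\hat m$: this is handled exactly by the $\sigma$-finiteness assumption via $\hat\chi$, which lets one weight the measure by a strictly positive integrable density, as in the unitization remark the authors make about \cite{HKT}. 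Finally, since by construction $\hat{\mathcal{E}}(\hat f,\hat g)=\mathcal{E}(f,g)$ for $f,g\in\mathcal{D}$ and $f\mapsto\hat f$ is an isometric algebra isomorphism for the supremum norm, the identification claimed in Theorem \ref{T:main} is immediate from this corollary.
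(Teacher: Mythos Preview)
Your overall strategy matches the paper's exactly: transfer everything to $\Delta$ via Sections~\ref{S:Alg}--\ref{S:Quad}, invoke Lemmas in Section~\ref{S:Clos} to check the hypotheses of \cite[Theorem~9.1]{H14}, and read off the Dirichlet form. Two points deserve correction, however, because you attribute to \cite[Theorem~9.1]{H14} more than the paper is willing to claim.

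First, the full-support property of $\hat m$ is \emph{not} delivered by \cite[Theorem~9.1]{H14}; the paper explicitly notes that the energy-dominant Radon measure produced there may fail to have full support. The fix the paper uses relies on the separability of $\Delta$: one chooses a countable dense set $\{x_i\}\subset\Delta$ and replaces the measure by its sum with $\sum_i 2^{-i}\delta_{x_i}$. Your sentence ``this is precisely what is done in \cite[Section~9]{H14}'' is therefore too optimistic; the argument needs this extra step, and it is here (and only here) that the separability of $\Delta$ from Lemma~\ref{L:algebra} is genuinely used.

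Second, the claim that regularity and the core property of $\hat{\mathcal{D}}_c$ are ``also part of the conclusion of that theorem'' is not what the paper asserts. The paper instead defers to \cite[Lemma~3.4]{HKT} and the proof of \cite[Theorem~5.1]{HKT} for showing that $\hat{\mathcal{D}}_c$ is a core. Your informal sketch (``cut off against compactly supported functions using the algebra structure and the operation of normal contractions'') is the right idea, but it is not a consequence of \cite[Theorem~9.1]{H14} per se. A minor related point: your argument that $\hat\chi$ is strictly positive on all of $\Delta$ is shaky, since strict positivity of $\chi$ on $X$ does not force $\chi$ to be bounded away from zero, hence $\chi$ need not be invertible in the unitization and $\hat\chi$ could in principle vanish on $\Delta\setminus\iota(X)$.
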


Corollary \ref{C:final} follows from \cite[Theorem 9.1]{H14} together with the fact that $\Delta$ is separable. Note that if the finite (energy dominant) Radon measure constructed in \cite[Theorem 9.1]{H14} does not have full support, we can put $\hat{m}$ to be the sum of this measure and a finite measure of form $\sum_{i=1}^\infty 2^{-i}\delta_{x_i}$, where $\left\lbrace x_i\right\rbrace_{i=1}^\infty$ is a countable dense subset of $\Delta$ and $\delta_{x_i}$ are unit point masses at the $x_i$. Then the finite (energy dominant) Radon measure $\hat{m}$ has full support. That $\hat{\mathcal{D}}_c$ is a core for $(\hat{\mathcal{E}}, \hat{\mathcal{F}})$ can be seen as in \cite[Lemma 3.4]{HKT} and the proof of \cite[Theorem 5.1]{HKT}.

\begin{remark}
The Dirichlet form $(\hat{\mathcal{E}},\hat{\mathcal{F}})$ admits a carr\'e du champ, \cite[Chapter I.4]{BH91}, see for instance \cite{H14}.
\end{remark}

\section{Finitely generated measure spaces and weighted graphs}\label{S:finite}

We discuss bounded Dirichlet forms on finitely generated measure spaces and identify the with discrete Dirichlet forms on finite (weighted) graphs.

Let $X$ be a nonempty set and $\mathcal{X}$ a finitely generated algebra of subsets of $X$. Being finite, it is a $\sigma$-algebra. We may assume there exists a finite partition $\left\lbrace A_1, ..., A_n\right\rbrace$ of $X$ into pairwise disjoint subsets $A_1,... , A_n$. In this case we observe that  
$\mathcal{X}$ consists of all unions $\bigcup_{j=1}^k A_{i_j}$ where $1\leq k\leq n$ and  $i_j\in\left\lbrace 1, ..., n\right\rbrace$ for all $j=1,..., k$. Let $\mu$ be a finite and finitely additive measure on $\mathcal{X}$, then automatically $\sigma$-additive. The space $L^2(X,\mathcal{X}, \mu)$ of $\mu$-square integrable functions equals the finite vector space 
\[\left\lbrace \sum_{i=1}^n \alpha_i\mathbf{1}_{A_i}: \alpha_i\in\mathbb{R}\right\rbrace ,\]
endowed with the scalar product defined by $\left\langle f,g\right\rangle=\sum_{i=1}^n \alpha_i\beta_i \mu(A_i)$ for $f=\sum_i \alpha_i\mathbf{1}_{A_i}$ and $g=\sum_i \beta_i \mathbf{1}_{A_i}$. 

A symmetric Markov operator $P$ on $L^2(X,\mathcal{X},\mu)$ produces a symmetric $(n\times n)$-matrix $C=(c_{ij})_{i,j=1}^n$ by
\[c_{ij}:=\left\langle P\mathbf{1}_{A_i},\mathbf{1}_{A_j}\right\rangle_{L^2(X,\mathcal{X},\mu)}.\]
Consider the quadratic form 
\[\mathcal{E}_P(f):=\left\langle f-Pf,f\right\rangle_{L^2(X,\mathcal{X},\mu)}\]
on $L^2(X,\mathcal{X},\mu)$ induced by $P$. If in addition we assume $P\mathbf{1}=\mathbf{1}$ then we have $\mu(A_j)=\sum_i c_{ij}$ for all $j$ and for a function $f=\sum_{i=1}^n\alpha_i\mathbf{1}_{A_i}$ we easily see that
\[\mathcal{E}_P(f)=\frac12\sum_{i=1}^n\sum_{j=1}^n (\alpha_i-\alpha_j)^2 c_{ij}.\]
A slightly different formula is still true if we allow $P\mathbf{1}<\mathbf{1}$, see \cite[Lemma I.2.3.2.1]{BH91}.

We can identify the partition $\mathcal{P}$ with a finite set $V=\left\lbrace p_1,... p_n\right\rbrace$ and $\mu$ with a finite nonnegative function $\mu$ on $V$ given by $\mu(p_i):=\mu(A_i)$. Then \emph{the space $L^2(X,\mathcal{X},\mu)$ is isometrically isomorphic to the space $l(V)$ of real valued functions on $V$}, and we can \emph{identify a function from  $L^2(X,\mathcal{X},\mu)$ with a function in $l(V)$}. Let $E$ be the collection of elements $(p_i,p_j)$ of $V\times V$ such that $c_{ij}>0$. This yields a finite graph $(V,E)$, endowed with vertex weights $\mu(p_i)$ and edge weights $c_{ij}$. The matrix $C=(c_{ij})_{i,j=1}^n$ is the transition matrix of a symmetric random walk on $V$ and the  energy form $\mathcal{E}_P$ may be identified with the graph energy form on $l(V)$, given by
\[\mathcal{E}_P(f)=\frac12\sum_{i=1}^n\sum_{j=1}^n (f(p_i)-f(p_j))^2 c_{ij}, \ \ f\in l(V),\]
provided $P\mathbf{1}=\mathbf{1}$. If $P\mathbf{1}<\mathbf{1}$ we can still identify $\mathcal{E}_P$ with a (slightly different) graph energy form.

\section[Semigroup approximation and strong resolvent convergence]{Semigroup approximation and strong resolvent convergence}\label{S:sgapprox}

In this section we discuss the well known fact that any Dirichlet form can be approximated by bounded Dirichlet forms.

Let $(X,\mathcal{X},\mu)$ be a $\sigma$-finite measure space and $(\mathcal{E},\mathcal{F})$ a (symmetric) Dirichlet form on $L^2(X,\mathcal{X},\mu)$. Let $(P_t)_{t\geq 0}$ denote the uniquely associated strongly continuous semigroup of symmetric contraction operators $P_t$ on $L^2(X,\mathcal{X},\mu)$. The operators $P_t$ enjoy the Markov property, i.e. $0\leq P_tf\leq 1$ $\mu$-a.e. whenever $0\leq f\leq 1$ $\mu$-a.e. We have
\begin{equation}\label{E:sgapprox}
\mathcal{E}(f)=\lim_{t\to 0}\frac{1}{t}\left\langle f-P_tf,f\right\rangle_{L^2(X,\mathcal{X},\mu)}=\sup_{t>0}\frac{1}{t}\left\langle f-P_tf,f\right\rangle_{L^2(X,\mathcal{X},\mu)}
\end{equation}
for any $f\in \mathcal{F}$, note that the function $t\mapsto \frac{1}{t}\left\langle f-P_tf,f\right\rangle_{L^2(X,\mathcal{X},\mu)}$ is increasing as $t$ decreases to zero (what follows from the spectral theorem). By (\ref{E:sgapprox}) we may extend the definition of $\mathcal{E}$, seen as a quadratic functional with effective domain $\mathcal{F}$, to all of $L^2(X,\mathcal{X},\mu)$. \\

Of course formula (\ref{E:sgapprox}) is standard. Here we use it as a first step in an approximation procedure. For $n=1,2,...$ consider the quadratic forms defined by 
\begin{equation}\label{E:discretesgapprox}
\mathcal{E}^{(n)}(f):=2^n\left\langle f-P_{2^{-n}}f,f\right\rangle_{L^2(X,\mathcal{X},\mu)}, \ \ f\in L^2(X,\mathcal{X},\mu).
\end{equation}
They are bounded Dirichlet forms on $L^2(X,\mathcal{X},\mu)$   satisfying $\mathcal{E}^{(n)}(f)\leq 2^n\left\|f\right\|_{L^2(X,\mathcal{X},\mu)}^2$. Note that the operators $I-P_{2^{-n}}$ satisfy the Markov property and therefore are contractions on $L^2(X,\mathcal{X},\mu)$, \cite[Corollary I.2.2.4]{BH91}. Let $\mathcal{L}$ denote the generator of $(\mathcal{E},\mathcal{F})$ and $\mathcal{L}^{(n)}=2^n(P_{2^{-n}}-I)$ the generators of the forms $\mathcal{E}^{(n)}$, respectively.

\begin{lemma}\label{L:sgapprox}
The forms $\mathcal{E}^{(n)}$ converge to the form $\mathcal{E}$ in the sense of Mosco as $n$ goes to infinity. Their generators $\mathcal{L}^{(n)}$ converge to the generator $\mathcal{L}$ of $\mathcal{E}$ in the strong resolvent sense.
\end{lemma}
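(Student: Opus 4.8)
The plan is to verify the two Mosco conditions directly from the representation (\ref{E:discretesgapprox}) of the forms $\mathcal{E}^{(n)}$, using the spectral theorem for the non-positive self-adjoint generator $\mathcal{L}$. Writing $-\mathcal{L}=\int_0^\infty \lambda\, dE_\lambda$ for the spectral resolution, we have $P_t=\int_0^\infty e^{-t\lambda}\,dE_\lambda$, so for $f\in L^2(X,\mathcal{X},\mu)$
\[
\mathcal{E}^{(n)}(f)=2^n\langle f-P_{2^{-n}}f,f\rangle_{L^2(X,\mathcal{X},\mu)}=\int_0^\infty \varphi_n(\lambda)\, d\langle E_\lambda f,f\rangle,
\]
where $\varphi_n(\lambda):=2^n(1-e^{-2^{-n}\lambda})$, while $\mathcal{E}(f)=\int_0^\infty \lambda\, d\langle E_\lambda f,f\rangle$ (with the convention that $\mathcal{E}(f)=+\infty$ when $f\notin\mathcal{F}$, consistent with the extension noted after (\ref{E:sgapprox})). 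The elementary inequality $0\leq \varphi_n(\lambda)\leq \lambda$ together with $\varphi_n(\lambda)\uparrow\lambda$ monotonically as $n\to\infty$ (each $\varphi_n$ is increasing in $n$ for fixed $\lambda$, which mirrors the monotonicity remark after (\ref{E:sgapprox})) is the core analytic input.

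First I would prove condition (ii), the existence of a recovery sequence. Given $f\in L^2(X,\mathcal{X},\mu)$ one simply takes the constant sequence $f_n=f$: it converges strongly to $f$, and by monotone convergence applied to the finite measure $d\langle E_\lambda f,f\rangle$ we get $\mathcal{E}^{(n)}(f)=\int_0^\infty\varphi_n(\lambda)\,d\langle E_\lambda f,f\rangle \uparrow \int_0^\infty\lambda\, d\langle E_\lambda f,f\rangle=\mathcal{E}(f)$, so in particular $\limsup_n\mathcal{E}^{(n)}(f)\leq\mathcal{E}(f)$, with equality in fact. (This works whether or not $f\in\mathcal{F}$, since if $f\notin\mathcal{F}$ the right-hand side is $+\infty$.) Next I would prove condition (i), the $\liminf$ inequality: let $f_k\rightharpoonup f$ weakly in $L^2(X,\mathcal{X},\mu)$. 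Fix $m$; since $\varphi_m(\lambda)\leq\varphi_k(\lambda)$ for $k\geq m$, we have $\mathcal{E}^{(k)}(f_k)\geq \langle \psi_m(-\mathcal{L})f_k,f_k\rangle$ where $\psi_m(\lambda):=\varphi_m(\lambda)$ is a bounded, continuous, nonnegative function, so $\psi_m(-\mathcal{L})$ is a bounded nonnegative self-adjoint operator. The quadratic form $g\mapsto\langle\psi_m(-\mathcal{L})g,g\rangle$ associated to a bounded nonnegative operator is weakly lower semicontinuous (it is the norm-squared of $\psi_m(-\mathcal{L})^{1/2}g$, and $g\mapsto \psi_m(-\mathcal{L})^{1/2}g$ is weakly continuous, and the norm is weakly l.s.c.), hence $\liminf_k\mathcal{E}^{(k)}(f_k)\geq\liminf_k\langle\psi_m(-\mathcal{L})f_k,f_k\rangle\geq\langle\psi_m(-\mathcal{L})f,f\rangle=\int_0^\infty\varphi_m(\lambda)\,d\langle E_\lambda f,f\rangle$. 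Letting $m\to\infty$ and using monotone convergence on the right gives $\liminf_k\mathcal{E}^{(k)}(f_k)\geq\mathcal{E}(f)$.

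Finally, the statement about generators follows by invoking the equivalences recalled in the introduction: Mosco convergence of the forms is equivalent to strong resolvent convergence of the associated generators, by \cite[Definition 2.1.1 and Theorem 2.4.1]{Mosco94} together with \cite[Theorem 1.3 in Chapter 8, Paragraph 1, Section 1, p. 427]{Kato}; alternatively one can argue directly, since $G_\lambda^{(n)}=(\lambda-\mathcal{L}^{(n)})^{-1}=\int_0^\infty(\lambda+\varphi_n(s))^{-1}\,dE_s$ converges strongly to $G_\lambda=\int_0^\infty(\lambda+s)^{-1}\,dE_s$ for $\lambda>0$ by dominated convergence in the spectral integral (the integrands are bounded by $\lambda^{-1}$ and converge pointwise). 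I expect the main obstacle to be a careful treatment of condition (i) in the genuinely infinite-dimensional and possibly unbounded setting — specifically, making the reduction to the bounded operators $\psi_m(-\mathcal{L})$ clean and confirming weak lower semicontinuity there — since (ii) is essentially immediate and the generator statement is a citation; everything else is bookkeeping with the spectral theorem and the two inequalities $0\leq\varphi_n(\lambda)\leq\lambda$ and $\varphi_n\uparrow\lambda$.
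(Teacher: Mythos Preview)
Your argument is correct, but it takes a different route from the paper's. Both proofs exploit the monotonicity $\mathcal{E}^{(m)}(g)\leq\mathcal{E}^{(k)}(g)$ for $m\leq k$ and handle the recovery sequence by taking $f_n=f$. The difference is in the $\liminf$ condition: you write $\mathcal{E}^{(m)}(g)=\|\psi_m(-\mathcal{L})^{1/2}g\|^2$ via the spectral theorem and invoke weak lower semicontinuity of the norm (after noting that the bounded operator $\psi_m(-\mathcal{L})^{1/2}$ is weak--weak continuous), whereas the paper avoids explicit spectral calculus and instead uses the Banach--Saks theorem to pass from weak convergence of $(f_{n_k})$ to strong convergence of Ces\`aro means, then exploits continuity of the bounded form $\mathcal{E}^{(n_j)}$ under strong limits together with convexity. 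Your approach is slightly more direct and also yields the alternative spectral proof of strong resolvent convergence you sketch at the end; the paper's Banach--Saks argument has the advantage of being reusable verbatim for the subsequent approximation lemmas (Lemmas~\ref{L:galerkin}, \ref{L:sigma}, \ref{L:finite}), where the approximating forms are no longer functions of $\mathcal{L}$ alone and a spectral-integral argument is not available.
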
 

The proof is folklore, we sketch it for convenience.

\begin{proof}
Condition (i) for Mosco convergence is trivially satisfied, because we may assume $f\in\mathcal{F}$ and use the constant sequence $f_n:=f$ together with (\ref{E:sgapprox}). For condition (ii) let $(f_n)_n$ converge weakly to $f$. We may assume that $\liminf_n \mathcal{E}^{(n)}(f_n)<+\infty$. Let $(f_{n_k})_k$ be a subsequence with $\lim_k\mathcal{E}^{(n_k)}(f_{n_k})<+\infty$. As it is uniformly bounded in $L_2(X,\mathcal{X},\mu)$ we can apply the Banach-Saks theorem and extract a subsequence $(f_{n_{k_l}})_l$ converging weakly to $f$ and such that the Ces\`aro averages $\frac{1}{N}\sum_{l=1}^N f_{n_{k_l}}$ converge strongly to $f$. Then for any fixed $n_j$ we have
\begin{multline}
\mathcal{E}^{(n_j)}(f)=\lim_N \mathcal{E}^{(n_j)}\left(\frac{1}{N}\sum_{l=1}^N f_{n_{k_l}}\right)\leq \lim_N \frac{1}{N}\sum_{l=1}^N \mathcal{E}^{(n_j)}(f_{n_{k_l}})\notag\\
\leq \limsup_k \mathcal{E}^{(n_j)}(f_{n_k})\leq \liminf_k \mathcal{E}^{(n_k)}(f_{n_k}).
\end{multline}
As the right hand side does not depend on $j$ and the above holds for any such subsequence $(f_{n_k})_k$, we obtain $\mathcal{E}(f)\leq\liminf_n\mathcal{E}^{(n)}(f_n)$. 
\end{proof}

\section[Galerkin and finite graph approximation]{Galerkin and finite graph approximation}
\label{S:Galerkin}

To prove Theorem \ref{T:main2} we now combine the semigroup approximation from Section \ref{S:sgapprox} with further approximation steps. 

Assume that $\mathcal{X}$ is a \emph{countably generated} $\sigma$-algebra over $X$ and that $\mu$ is a $\sigma$-finite measure on $\mathcal{X}$. Let $(\mathcal{E},\mathcal{F})$ be a symmetric Dirichlet form on $L^2(X,\mathcal{X},\mu)$ and let $\mathcal{E}^{(n)}$ be the bounded Dirichlet forms as defined in (\ref{E:discretesgapprox}). Let $(\varphi_i)_{i=1}^\infty$ be complete orthonormal system in the separable Hilbert space $L^2(X,\mathcal{X},\mu)$ and for any $m$ let $\pi_{m}$ denote the projection onto the finite dimensional subspace $\lin (\varphi_1,...,\varphi_m)$. Given $f\in L^2(X,\mathcal{X},\mu)$ we have $f=\sum_{i=1}^\infty c_i\varphi_i$ with $c_i=\left\langle f,\varphi_i\right\rangle_{L^2(X,\mathcal{X},\mu)}$ and 
\[\pi_m(f)=\sum_{i=1}^m c_i\varphi_i.\]
Clearly $\lim_m\pi_m(f)=f$ in $L^2(X,\mathcal{X},\mu)$. For any $n$ and $m$ consider the bounded Dirichlet forms on $L^2(X,\mathcal{X},\mu)$ defined by 
\begin{equation}\label{E:galerkin}
\mathcal{E}^{(n,m)}(f):=2^n\left\langle \pi_m(f)-P_{2^{-n}}\pi_m(f),\pi_m(f)\right\rangle_{L^2(X,\mathcal{X},\mu)}, \ \ f\in L^2(X,\mathcal{X},\mu).
\end{equation}
Note that $\mathcal{E}^{(n,m)}(f)=\mathcal{E}^n(\pi_m(f))$ and $\mathcal{E}^{(n,m)}(f)\leq 2^n\left\|f\right\|_{L^2(X,\mathcal{X},\mu)}^2$. Let $$\mathcal{L}^{(n,m)}=2^n\pi_m(P_{2^{-n}}-I)\pi_m$$ denote the generator of $\mathcal{E}^{(n,m)}$. The following Lemma provides a second approximation procedure, now of Galerkin type. Its proof is similar to that of Lemma \ref{L:sgapprox}, we omit it.

\begin{lemma}\label{L:galerkin}
For any fixed $n$ the forms $\mathcal{E}^{(n,m)}$ converge to the form $\mathcal{E}^{(n)}$ in the sense of Mosco as $m$ goes to infinity. Their generators $\mathcal{L}^{(n,m)}$ converge to the generator $\mathcal{L}^{(n)}$ of $\mathcal{E}^{(n)}$ in the strong resolvent sense.
\end{lemma}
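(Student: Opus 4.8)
The plan is to mimic the proof of Lemma \ref{L:sgapprox} almost verbatim, with $\pi_m$ inserted and $n$ held fixed throughout. Fix $n$. First I would verify condition (ii) of Mosco convergence, the recovery sequence: given $f\in L^2(X,\mathcal{X},\mu)$, take $f_m:=\pi_m(f)$. Then $f_m\to f$ strongly since $(\varphi_i)$ is a complete orthonormal system, and $\mathcal{E}^{(n,m)}(f_m)=\mathcal{E}^{(n)}(\pi_m\pi_m f)=\mathcal{E}^{(n)}(\pi_m f)$. Because $\mathcal{E}^{(n)}(g)=2^n\langle g-P_{2^{-n}}g,g\rangle_{L^2(X,\mathcal{X},\mu)}$ is a bounded, hence $\|\cdot\|_{L^2}$-continuous, quadratic form and $\pi_m f\to f$, we get $\mathcal{E}^{(n,m)}(f_m)=\mathcal{E}^{(n)}(\pi_m f)\to\mathcal{E}^{(n)}(f)$, so in particular $\limsup_m\mathcal{E}^{(n,m)}(f_m)\leq\mathcal{E}^{(n)}(f)$. (Actually boundedness makes this easier than in Lemma \ref{L:sgapprox}, where one had to invoke (\ref{E:sgapprox}) and the constant sequence.)

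Next I would treat condition (i), the $\liminf$ inequality: let $f_m\to f$ weakly in $L^2(X,\mathcal{X},\mu)$; we must show $\mathcal{E}^{(n)}(f)\leq\liminf_m\mathcal{E}^{(n,m)}(f_m)$. Assume the right side is finite and pass to a subsequence $(f_{m_k})$ realizing the $\liminf$ with finite limit; it is then bounded in $L^2$ (since $\mathcal{E}^{(n,m)}(g)=\mathcal{E}^{(n)}(\pi_m g)$ controls $\|\pi_m g\|$ through the nonnegative quadratic form, and in any case weak convergence gives boundedness directly by uniform boundedness). Apply Banach--Saks to extract a further subsequence whose Ces\`aro averages $\frac1N\sum_{l=1}^N f_{m_{k_l}}$ converge strongly to $f$. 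Fix $m_j$. For each $l$ with $m_{k_l}\geq m_j$ we have $\pi_{m_j}\pi_{m_{k_l}}=\pi_{m_j}$, hence
\[
\pi_{m_j}\Big(\frac1N\sum_{l=1}^N f_{m_{k_l}}\Big)=\frac1N\sum_{l=1}^N\pi_{m_j}(f_{m_{k_l}})=\frac1N\sum_{l=1}^N\pi_{m_j}\pi_{m_{k_l}}(f_{m_{k_l}})
\]
(up to the finitely many initial terms with $m_{k_l}<m_j$, which contribute $o(1)$ to the average). Then, using continuity of $\mathcal{E}^{(n)}$ on $L^2$, convexity of $g\mapsto\mathcal{E}^{(n)}(g)$, and $\mathcal{E}^{(n,m)}(g)=\mathcal{E}^{(n)}(\pi_m g)$,
\[
\mathcal{E}^{(n,m_j)}(f)=\mathcal{E}^{(n)}(\pi_{m_j}f)=\lim_N\mathcal{E}^{(n)}\Big(\pi_{m_j}\tfrac1N\textstyle\sum_l f_{m_{k_l}}\Big)\leq\liminf_N\tfrac1N\sum_{l=1}^N\mathcal{E}^{(n)}(\pi_{m_j}\pi_{m_{k_l}}f_{m_{k_l}}).
\]
For the terms with $m_{k_l}\geq m_j$ one has $\mathcal{E}^{(n)}(\pi_{m_j}\pi_{m_{k_l}}f_{m_{k_l}})\leq\mathcal{E}^{(n)}(\pi_{m_{k_l}}f_{m_{k_l}})=\mathcal{E}^{(n,m_{k_l})}(f_{m_{k_l}})$, since applying the further projection $\pi_{m_j}$ to the Markovian form $\mathcal{E}^{(n)}$ only decreases the energy (equivalently, $\pi_{m_j}$ is a contraction for the $\mathcal{E}^{(n)}$-form norm because $I-P_{2^{-n}}\geq 0$ and $\pi_{m_j}\leq I$; alternatively use the general fact that compressing a nonnegative operator to a subspace does not increase the associated form). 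Hence the right side is $\leq\limsup_k\mathcal{E}^{(n,m_{k})}(f_{m_{k}})=\liminf_m\mathcal{E}^{(n,m)}(f_m)$, and since this bound is independent of $j$ and $\mathcal{E}^{(n,m_j)}(f)=\mathcal{E}^{(n)}(\pi_{m_j}f)\to\mathcal{E}^{(n)}(f)$, letting $j\to\infty$ yields $\mathcal{E}^{(n)}(f)\leq\liminf_m\mathcal{E}^{(n,m)}(f_m)$.

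Finally, the strong resolvent convergence of $\mathcal{L}^{(n,m)}$ to $\mathcal{L}^{(n)}$ follows from the established Mosco convergence together with \cite[Definition 2.1.1 and Theorem 2.4.1]{Mosco94} and \cite[Theorem VIII.19]{RS}, exactly as recorded in the introduction; the forms and their generators are all bounded, so no domain subtleties arise. The step I expect to need the most care is the monotonicity estimate $\mathcal{E}^{(n)}(\pi_{m_j}g)\leq\mathcal{E}^{(n)}(\pi_{m_{k_l}}g)$ for $m_j\leq m_{k_l}$: it is precisely the statement that the compression of the nonnegative symmetric operator $2^n(I-P_{2^{-n}})$ to a subspace is dominated (in the form sense) by its compression to any larger subspace, which is immediate from $\langle\pi A\pi g,g\rangle=\langle A\pi g,\pi g\rangle$ and the variational characterization, but worth stating cleanly; everything else is a routine transcription of the proof of Lemma \ref{L:sgapprox}, which is why the authors omit it.
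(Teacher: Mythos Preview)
Your recovery-sequence argument for condition (ii) is fine (and, as you note, simpler than in Lemma~\ref{L:sgapprox} because $\mathcal{E}^{(n)}$ is bounded). The gap is in your treatment of condition (i): the monotonicity claim
\[
\mathcal{E}^{(n)}(\pi_{m_j} h)\;\leq\;\mathcal{E}^{(n)}(h)
\]
for an orthogonal projection $\pi_{m_j}$ is \emph{false} in general, and neither of the justifications you offer (``$I-P_{2^{-n}}\geq 0$ and $\pi_{m_j}\leq I$'', or ``compressing a nonnegative operator to a subspace does not increase the form'') is valid. A two-point counterexample suffices: on $\ell^2(\{1,2\})$ with counting measure take the symmetric Markov operator $P=\tfrac12\begin{pmatrix}1&1\\1&1\end{pmatrix}$, so that $I-P=\tfrac12\begin{pmatrix}1&-1\\-1&1\end{pmatrix}$; with the standard basis and $\pi_1$ the projection onto the first coordinate, the constant vector $h=(1,1)$ has $\langle (I-P)h,h\rangle=0$ but $\pi_1 h=(1,0)$ has $\langle (I-P)\pi_1 h,\pi_1 h\rangle=\tfrac12>0$. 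Thus an orthogonal projection can \emph{increase} the energy of a bounded Dirichlet form, and your chain of inequalities breaks at exactly the step you flagged as delicate. (Note that this is precisely the place where a literal transcription of the proof of Lemma~\ref{L:sgapprox} fails: there the monotonicity $\mathcal{E}^{(n_j)}\leq\mathcal{E}^{(n_k)}$ for $n_j\leq n_k$ comes from the spectral theorem, and there is no analogue for the Galerkin projections.)

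The fix is short and avoids Banach--Saks altogether. If $f_m\to f$ weakly, then $\pi_m f_m\to f$ weakly as well: for any $g$,
\[
\langle \pi_m f_m,g\rangle-\langle f,g\rangle=\langle f_m,\pi_m g-g\rangle+\langle f_m-f,g\rangle,
\]
and the first term tends to zero because $\sup_m\|f_m\|<\infty$ (uniform boundedness) and $\pi_m g\to g$ strongly, while the second tends to zero by weak convergence. Now $\mathcal{E}^{(n)}(h)=\big\|(I-P_{2^{-n}})^{1/2}h\big\|_{L^2}^2$ with $(I-P_{2^{-n}})^{1/2}$ bounded, so $\mathcal{E}^{(n)}$ is weakly lower semicontinuous on $L^2(X,\mathcal{X},\mu)$. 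Hence
\[
\mathcal{E}^{(n)}(f)\;\leq\;\liminf_m \mathcal{E}^{(n)}(\pi_m f_m)\;=\;\liminf_m \mathcal{E}^{(n,m)}(f_m),
\]
which is exactly condition (i). With (i) and (ii) in hand, the strong resolvent convergence follows as you state.
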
 

In other words, any bounded and separable Dirichlet form can be approximated by Dirichlet forms on finite dimensional $L^2$-spaces.

A third approximation is provided by $\sigma$-finiteness. Let $(X_l)_{l=1}^\infty$ be an increasing sequence of sets $X_l\in\mathcal{X}$ with finite measure $\mu(X_l)<+\infty$ such that $X=\bigcup_{l=1}^\infty X_l$. Given $n$, $m$ and $l$ consider the bounded Dirichlet forms 
\begin{equation}\label{E:sigma}
\mathcal{E}^{(n,m,l)}:=2^n\left\langle \pi_m(f)\mathbf{1}_{X_l}-P_{2^{-n}}(\pi_m(f)\mathbf{1}_{X_l}),\pi_m(f)\mathbf{1}_{X_l}\right\rangle_{L^2(X,\mathcal{X},\mu)}, \ \ f\in L^2(X,\mathcal{X},\mu).
\end{equation}
Their generators are $\mathcal{L}^{(n,m,l)}=2^n\pi_m \mathbf{1}_{X_l}(P_{2^{-n}}-I)\mathbf{1}_{X_l}\pi_m$, respectively. Since 
\[\mathcal{E}^{n,m,l}(f)=\left\|(I-P_{2^{-n}})^{1/2}\pi_m(f)\mathbf{1}_{X_l}\right\|_{L^2(X,\mathcal{X},\mu)}^2\] is increasing in $l$ for any $f$ and $\mathcal{E}^{(n,m)}(f)=\sup_l 
\mathcal{E}^{n,m,l}(f)$, we can proceed as in Lemma \ref{L:sgapprox} to obtain the following.

\begin{lemma}\label{L:sigma}
For any fixed $n$ and $m$ the forms $\mathcal{E}^{(n,m,l)}$ converge to the form $\mathcal{E}^{(n,m)}$ in the sense of Mosco as $l$ goes to infinity. Their generators $\mathcal{L}^{(n,m,l)}$ converge to the generator $\mathcal{L}^{(n,m)}$ of $\mathcal{E}^{(n,m)}$ in the strong resolvent sense.
\end{lemma}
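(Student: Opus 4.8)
The plan is to mimic the proof of Lemma \ref{L:sgapprox} essentially verbatim, using the monotonicity that is already recorded in the statement. The key structural facts are: first, $\mathcal{E}^{(n,m,l)}(f) = \mathcal{E}^{(n,m)}(f\mathbf{1}_{X_l})$ in the sense that only $\pi_m(f)\mathbf{1}_{X_l}$ enters, and $\mathcal{E}^{(n,m,l)}(f) = \|(I-P_{2^{-n}})^{1/2}\pi_m(f)\mathbf{1}_{X_l}\|^2_{L^2(X,\mathcal{X},\mu)}$ is nondecreasing in $l$ with supremum $\mathcal{E}^{(n,m)}(f)$; second, the forms are uniformly bounded, $\mathcal{E}^{(n,m,l)}(f)\leq 2^n\|f\|^2_{L^2(X,\mathcal{X},\mu)}$, with a fixed bound independent of $l$.

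For condition (ii) of Mosco convergence I would simply take the constant recovery sequence $f_l := f$. Then by the monotone convergence just noted, $\mathcal{E}^{(n,m,l)}(f)\nearrow \mathcal{E}^{(n,m)}(f)$, so in particular $\limsup_l \mathcal{E}^{(n,m,l)}(f) = \mathcal{E}^{(n,m)}(f)$, and the strong convergence $f_l\to f$ is trivial. For condition (i), let $f_l\to f$ weakly in $L^2(X,\mathcal{X},\mu)$; we may assume $\liminf_l \mathcal{E}^{(n,m,l)}(f_l)<+\infty$ and pass to a subsequence $(f_{l_k})_k$ along which the liminf is attained as a limit. This subsequence is norm-bounded (weak convergence implies boundedness), so by the Banach--Saks theorem we may pass to a further subsequence whose Ces\`aro averages converge strongly to $f$. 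Fixing any index $l_j$ and exploiting that $\mathcal{E}^{(n,m,l_j)}$ is a bounded (hence strongly continuous) nonnegative quadratic form, convexity gives
\[
\mathcal{E}^{(n,m,l_j)}(f) = \lim_N \mathcal{E}^{(n,m,l_j)}\!\Bigl(\tfrac1N\sum_{k=1}^N f_{l_k}\Bigr) \leq \liminf_N \tfrac1N\sum_{k=1}^N \mathcal{E}^{(n,m,l_j)}(f_{l_k}) \leq \limsup_k \mathcal{E}^{(n,m,l_j)}(f_{l_k}) \leq \liminf_k \mathcal{E}^{(n,m,l_k)}(f_{l_k}),
\]
where the last inequality uses the monotonicity $\mathcal{E}^{(n,m,l_j)}\leq \mathcal{E}^{(n,m,l_k)}$ for $l_k\geq l_j$ (valid for all but finitely many $k$). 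Letting $l_j\to\infty$ and using $\sup_{l_j}\mathcal{E}^{(n,m,l_j)}(f)=\mathcal{E}^{(n,m)}(f)$ yields $\mathcal{E}^{(n,m)}(f)\leq \liminf_k\mathcal{E}^{(n,m,l_k)}(f_{l_k})$; since this holds for every such subsequence, $\mathcal{E}^{(n,m)}(f)\leq\liminf_l\mathcal{E}^{(n,m,l)}(f_l)$, which is (i).

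Finally, Mosco convergence of the forms is equivalent to strong resolvent convergence of the associated self-adjoint (here, bounded and non-positive) generators by \cite[Theorem 2.4.1]{Mosco94} together with \cite[Theorem 1.3, Chapter 8]{Kato} and \cite[Theorem VIII.19]{RS}, as recalled in the introduction; applying this to $\mathcal{E}^{(n,m,l)}$ and $\mathcal{E}^{(n,m)}$ gives that $\mathcal{L}^{(n,m,l)}\to\mathcal{L}^{(n,m)}$ in the strong resolvent sense. The only point requiring a moment's care is verifying that $\mathcal{E}^{(n,m,l)}(f)=\|(I-P_{2^{-n}})^{1/2}\pi_m(f)\mathbf{1}_{X_l}\|^2_{L^2(X,\mathcal{X},\mu)}$ is genuinely monotone in $l$ --- this is because $\pi_m(f)\mathbf{1}_{X_{l+1}} - \pi_m(f)\mathbf{1}_{X_l} = \pi_m(f)\mathbf{1}_{X_{l+1}\setminus X_l}$ has disjoint support from $\pi_m(f)\mathbf{1}_{X_l}$, but the quadratic form $\|(I-P_{2^{-n}})^{1/2}\,\cdot\,\|^2$ is not additive on disjointly supported functions in general, so one argues instead that $g\mapsto \|(I-P_{2^{-n}})^{1/2} g\mathbf{1}_{X_l}\|^2$ increases in $l$ by the Markov/sub-Markovian property of $I-P_{2^{-n}}$; this is exactly the standard fact underlying \cite[Lemma I.2.3.2.1]{BH91} and is the analytic heart of the estimate, though it is entirely routine. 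I expect no substantive obstacle: the lemma is, as the authors note, a direct replay of Lemma \ref{L:sgapprox} with $l$ in the role of $n$ and the supremum over $l$ supplied by $\sigma$-finiteness.
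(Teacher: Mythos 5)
Your overall strategy is exactly the one the paper indicates in the sentence preceding the lemma (monotonicity in $l$, supremum equal to $\mathcal{E}^{(n,m)}$, then ``proceed as in Lemma \ref{L:sgapprox}''), so you have reconstructed the intended argument. However, the step you yourself single out as the analytic heart --- the monotonicity of $l\mapsto \mathcal{E}^{(n,m,l)}(f)=\|(I-P_{2^{-n}})^{1/2}\pi_m(f)\mathbf{1}_{X_l}\|_{L^2(X,\mathcal{X},\mu)}^2$ --- is false in general, and the Markov property does not rescue it. Concretely, take $X=\{1,2\}$ with counting measure, let $(P_t)_{t\ge 0}$ be generated by $P-I$ where $P$ is the symmetric Markov matrix with all entries $\tfrac12$, and take $h\equiv 1$, $X_1=\{1\}$, $X_2=X$. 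Then $\langle (I-P_t)h\mathbf{1}_{X_1},h\mathbf{1}_{X_1}\rangle=\tfrac12(1-e^{-t})>0$ while $\langle (I-P_t)h,h\rangle=0$: truncating a function to a smaller set can strictly \emph{increase} its energy, because in the representation $\mathcal{E}_P(g)=\tfrac12\iint (g(x)-g(y))^2\,\sigma(dx,dy)+\int g^2(1-P\mathbf{1})\,d\mu$ of \cite[Lemma I.2.3.2.1]{BH91} the truncation replaces increments $(h(x)-h(y))^2$ across the boundary of $X_l$ by terms $h(x)^2$, which may be larger. (Note also that $I-P_{2^{-n}}$ is not itself a Markov operator --- it does not preserve positivity --- so the sub-Markovianity appeal does not apply.) Consequently the inequality $\limsup_k\mathcal{E}^{(n,m,l_j)}(f_{l_k})\leq\liminf_k\mathcal{E}^{(n,m,l_k)}(f_{l_k})$ in your condition (i) is unjustified, and even the identity $\mathcal{E}^{(n,m)}(f)=\sup_l\mathcal{E}^{(n,m,l)}(f)$ fails in the example above. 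To be fair, the paper asserts the same monotonicity immediately before the lemma, so you have faithfully reproduced a gap that is already in the source.

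The lemma itself is correct, and the repair is in fact easier than the Banach--Saks route. Since $\pi_m$ has finite rank, weak convergence $f_l\rightharpoonup f$ already forces norm convergence $\pi_m(f_l)\to\pi_m(f)$ (each coefficient $\langle f_l,\varphi_i\rangle$, $i\le m$, converges), and then $\|\pi_m(f_l)\mathbf{1}_{X_l}-\pi_m(f)\|_{L^2(X,\mathcal{X},\mu)}\leq\|\pi_m(f_l)-\pi_m(f)\|_{L^2(X,\mathcal{X},\mu)}+\|\pi_m(f)\mathbf{1}_{X\setminus X_l}\|_{L^2(X,\mathcal{X},\mu)}\to0$. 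Because $\mathcal{E}^{(n)}$ is a bounded, hence $L^2$-continuous, quadratic form, this yields $\lim_l\mathcal{E}^{(n,m,l)}(f_l)=\mathcal{E}^{(n)}(\pi_m(f))=\mathcal{E}^{(n,m)}(f)$, which gives condition (i) with equality; condition (ii) follows from the same continuity applied to the constant sequence $f_l:=f$, with no monotone convergence needed. The final passage from Mosco convergence to strong resolvent convergence in your proposal is fine.
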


This can be rephrased saying that any separable and bounded Dirichlet form on a finite dimensional $L^2$-space can be approximated by Dirichlet form of similar type with finite volume measure.

We provide a fourth approximation. For any fixed $i,k=1,2,...$ consider the level sets of the function $\varphi_i$ defined by
\[A_{j,k}^{(i)}:=\left\lbrace \frac{j}{2^k}< \varphi_i\leq \frac{j+1}{2^k}\right\rbrace, \ \ \ j=-2^{2k}, ..., 2^{2k}-1,\]
\begin{equation}\label{E:inftysets}
A^{(i)}_{-2^{2k}-1,k}:=\left\lbrace \varphi_i\leq -2^{k}\right\rbrace \ \ \text{ and }\ \ A^{(i)}_{2^{2k},k}:=\left\lbrace 2^k<\varphi_i\right\rbrace.
\end{equation}
They form a finite partition of $X$ consisting of $2^{2k+1}+2$ sets. For convenience we set $A^{(i)}_{j,k}:=\emptyset$ for $j\in\mathbb{Z}$ smaller than $-2^{2k}-1$ or greater that $2^{2k}$. 

\begin{remark}
The partition for $k'\geq k$ is a refinement of the partition for $k$.
\end{remark}

For fixed $m$, we now consider the first $m$ coordinate functions $\varphi_1,...,\varphi_m$ simultaneously and given $\mathbf{j}=(j_1, ..., j_m)$ and $k$, we write
\[\overline{A}_{\mathbf{j},k}^{(m)}:=A_{j_1,k}^{(1)}\cap ...\cap A_{j_m,k}^{(m)}\]
for the intersection of their level sets $A_{j_i,k}^{(i)}$. For any $m$ and $k$ the sets $\overline{A}_{\mathbf{j},k}^{(m)}$, $\mathbf{j}\in\mathbb{Z}^n$, form a partition $\mathcal{P}_{m,k}$ of $X$ and disregarding empty sets, it is a finite partition consisting of at most $(2^{2k+1}+2)^m$ sets.

\begin{remark}
If $k'$ and $m'$ are such that both $k\leq k'$ and $m\leq m'$ then $\mathcal{P}_{m',k'}$ is 
a refinement of $\mathcal{P}_{m,k}$.
\end{remark}

Now let $\mathcal{X}_{m,k}$ denote the finite algebra of subsets of $X$ that is generated by $\mathcal{P}_{m,k}$. It is the sub-$\sigma$-algebra $\mathcal{X}_{m,k}$ of $\mathcal{X}$ made up by all unions of sets from $\mathcal{P}_{m,k}$. We denote the restriction of $\mu$ to $\mathcal{X}_{m,k}$ again by $\mu$. The space $L^2(X,\mathcal{X}_{m,k},\mu)$ of finite linear combinations $\sum_{A\in\mathcal{P}_{m,k}} \alpha_{A} \mathbf{1}_{A}$ is a finite dimensional subspace of $L^2(X,\mathcal{X},\mu)$.

\begin{remark}
If $k'$ and $m'$ are such that both $k\leq k'$ and $m\leq m'$, then we have $\mathcal{X}_{m,k}\subset \mathcal{X}_{m',k'}$  and
$L^2(X,\mathcal{X}_{m,k},\mu)\subset L^2(X,\mathcal{X}_{m',k'},\mu)$.
\end{remark}

Given $l$, write $\mu_l$ for the finite measure given by 
\[\mu_l(A):=\mu(A\cap X_l), \ \ A\in\mathcal{X}.\] 
Let $\mathcal{P}_{m,k}^l$ denote the elements $A$ of $\mathcal{P}_{m,k}$ with positive measure $\mu_l$. Let $\pi_{m,l,k}$ denote the projections into the finite dimensional spaces $L^2(X,\mathcal{X}_{m,k},\mu)$ defined by
\[\pi_{m,l,k}(f):=\sum_{A\in\mathcal{P}_{m,k}^l}\alpha_A^{(m,l,k)}(f)\mathbf{1}_{A\cap X_l}, \ \ \text{ where } \ \ \alpha_A^{(m,l,k)}(f)=\frac{1}{\mu_l(A)}\int_{A}\pi_m(f)d\mu_l, \]
for any $f\in L^2(X,\mathcal{X},\mu)$. Let $\mathcal{P}_{m,k}^{l, \infty}$ denote the collection of elements $A\in \mathcal{P}_{m,k}^l$ that for some $i$ are contained in a set $A^{(i)}_{-2^{2k}-1,k}$ or $A^{(i)}_{2^{2k},k}$ as in (\ref{E:inftysets}).

\begin{lemma}\label{L:approxL2}
For any $m$ and $l$ and any $f\in L^2(X,\mathcal{X},\mu)$ we have 
\[\lim_k\pi_{m,l,k}(f)=\pi_m(f)\mathbf{1}_{X_l}\ \ \text{in $L^2(X,\mathcal{X},\mu)$.}\] 
\end{lemma}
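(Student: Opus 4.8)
The plan is to fix $m$, $l$ and $f\in L^2(X,\mathcal{X},\mu)$ and reduce the statement to an approximation statement about $g:=\pi_m(f)\mathbf 1_{X_l}$, a fixed function in $L^2(X,\mathcal X,\mu)$ supported on $X_l$ which has finite measure. Observe first that $\pi_{m,l,k}(f)$ is nothing but the conditional expectation of $\pi_m(f)$ with respect to the measure $\mu_l$ and the finite sub-$\sigma$-algebra $\mathcal{X}_{m,k}$, restricted (as a function) to $X_l$; equivalently, on $L^2(X_l,\mathcal X|_{X_l},\mu_l)$ it is the orthogonal projection onto the finite-dimensional subspace spanned by the indicators $\mathbf 1_{A}$, $A\in\mathcal P_{m,k}^l$. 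Thus what must be shown is that these conditional-expectation projections converge strongly to $g$ in $L^2(\mu_l)$ as $k\to\infty$.

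First I would invoke the martingale structure. By the Remark preceding the Lemma, the partitions $\mathcal P_{m,k}$ are refined as $k$ increases, so the algebras $\mathcal X_{m,k}$ form an increasing filtration in $k$, and $\bigl(\pi_{m,l,k}(f)\bigr)_k$ is an $L^2(\mu_l)$-bounded martingale (its $L^2$-norm is dominated by $\|g\|_{L^2(\mu_l)}$ since conditional expectation is a contraction). By the $L^2$ martingale convergence theorem it converges in $L^2(\mu_l)$ to the conditional expectation of $g$ given $\mathcal X_{m,\infty}:=\sigma\bigl(\bigcup_k \mathcal X_{m,k}\bigr)$. It therefore suffices to identify $g$ itself as being $\mathcal X_{m,\infty}$-measurable, i.e. to check that $\mathcal X_{m,\infty}$ contains (modulo $\mu_l$-null sets) the $\sigma$-algebra generated by $\varphi_1,\dots,\varphi_m$ and by $\mathbf 1_{X_l}$ — but actually only the former is needed, because $g$ is built from $\varphi_1,\dots,\varphi_m$ and then multiplied by $\mathbf 1_{X_l}$, and the ambient space in play is already $L^2(\mu_l)$, which only "sees" $X_l$.

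The measurability check is the routine core of the argument: for each fixed $i\le m$, the level sets $A^{(i)}_{j,k}$ of \eqref{E:inftysets} are dyadic level sets of $\varphi_i$, and as $k\to\infty$ the algebras they generate increase to $\sigma(\varphi_i)$, since every set of the form $\{a<\varphi_i\le b\}$ with $a<b$ real can be written as a countable union of such dyadic slabs (with the two unbounded sets $\{\varphi_i\le -2^k\}$ and $\{\varphi_i>2^k\}$ shrinking to the negligible "infinite" parts as $k\to\infty$, $\varphi_i$ being finite $\mu$-a.e.). Intersecting over $i=1,\dots,m$ shows that $\bigcup_k\mathcal X_{m,k}$ generates $\sigma(\varphi_1,\dots,\varphi_m)$. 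Since $\pi_m(f)$ is a finite linear combination $\sum_{i\le m}c_i\varphi_i$, it is $\sigma(\varphi_1,\dots,\varphi_m)$-measurable, hence $\mathcal X_{m,\infty}$-measurable, and so is $g=\pi_m(f)\mathbf 1_{X_l}$ once one notes $X_l$ is fixed and the limit is taken in $L^2(\mu_l)$ (so that $\mathbf 1_{X_l}$ acts as the identity). Therefore the martingale limit is $g$ itself, which is exactly the claim.

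The only genuinely delicate point — and the one I would expect to be the main obstacle — is the bookkeeping ensuring that the two unbounded "reservoir" sets $A^{(i)}_{-2^{2k}-1,k}$ and $A^{(i)}_{2^{2k},k}$ do not obstruct the generation of $\sigma(\varphi_i)$: one must argue that for any bounded Borel set the corresponding preimage under $\varphi_i$ is eventually (for $k$ large) an exact countable union of dyadic slabs disjoint from these reservoirs, using only that $\varphi_i\in L^2$, hence finite a.e. Everything else — the contraction property of conditional expectation, $L^2$-boundedness, and the martingale convergence theorem — is standard. One could alternatively bypass explicit martingale theory and argue directly that the projections onto $\lin\{\mathbf 1_A: A\in\mathcal P_{m,k}^l\}$ converge strongly because these subspaces increase to a subspace dense in (the closure of) $\{h\in L^2(\mu_l): h\ \sigma(\varphi_1,\dots,\varphi_m)\text{-measurable}\}$, which contains $g$; this is the same computation repackaged.
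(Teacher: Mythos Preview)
Your proposal is correct but takes a genuinely different route from the paper. The paper gives a direct elementary estimate: it writes $\|\pi_m(f)\mathbf 1_{X_l}-\pi_{m,l,k}(f)\|_{L^2}^2$ as a sum over atoms $A\in\mathcal P_{m,k}^l$ and splits this sum into two parts. For atoms in $\mathcal P_{m,k}^{l,\infty}$ (those contained in a reservoir set), Chebyshev's inequality gives $\mu(\{|\varphi_i|\ge 2^k\})\le 2^{-2k}$, so the total measure of these atoms is at most $m\,2^{-2k}$ and Jensen's inequality bounds their contribution. For the remaining atoms one has $|\varphi_i(x)-\varphi_i(y)|\le 2^{-k}$ on each $A$, and a Cauchy--Schwarz/Jensen computation together with $\mu(X_l)<\infty$ gives a bound of order $m\,2^{-2k}\mu(X_l)\|f\|_{L^2}^2$.

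Your argument instead recognizes $\pi_{m,l,k}(f)$ as the conditional expectation of $\pi_m(f)\mathbf 1_{X_l}$ with respect to the increasing filtration $(\mathcal X_{m,k}|_{X_l})_k$ on the finite measure space $(X_l,\mu|_{X_l})$, and appeals to the $L^2$ martingale convergence theorem, reducing the problem to the routine check that $\sigma(\varphi_1,\dots,\varphi_m)\subset\mathcal X_{m,\infty}$. This is more conceptual and essentially one-line once the martingale structure is identified; the paper's hands-on computation is more self-contained and in principle delivers an explicit rate of convergence in $k$, though that rate is never used downstream. The point you flag as ``delicate'' (the reservoir sets) is in fact harmless in your approach, since any bounded dyadic slab $\{j2^{-k_0}<\varphi_i\le (j+1)2^{-k_0}\}$ lies in $\mathcal X_{m,k}$ once $2^k>|j|2^{-k_0}+1$; by contrast, the reservoir sets are exactly where the paper's direct estimate has to work hardest.
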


\begin{proof}
Let $m$ be fixed and $\varepsilon>0$. We show that for large enough $k$ (depending on $m$, $l$ and $\varepsilon$) we have
\begin{equation}\label{E:approx}
\left\|\pi_m(f)\mathbf{1}_{X_l}-\pi_{m,l,k}(f)\right\|_{L^2(X,\mathcal{X},\mu)}<\varepsilon.
\end{equation}
Note first that
$$
\int_{X}(\pi_m(f)-\pi_{m,l,k}(f))^2\:d\mu_l =\int_{X}\left(\sum_{A\in\mathcal{P}_{m,k}^l}(\pi_m(f)-\alpha_A^{(m,l,k)}(f))\mathbf{1}_A\right)^2\:d\mu_l$$$$=\sum_{A\in\mathcal{P}_{m,k}^l} \int_{A}\left(\frac{1}{\mu_l(A)}\int_A(\pi_m(f)(x)-\pi_m(f)(y))\mu_l(dy)\right)^2\mu_l(dx).\notag
$$

For any $A\in \mathcal{P}_{m,k}^{l,\infty}$ there is some $i$ such that $A\subset\left\lbrace |\varphi_i|\geq 2^k\right\rbrace$. We have $\mu(\left\lbrace |\varphi_i|\geq 2^k\right\rbrace)\leq 2^{-2k}$ by \v{C}ebyshev's inequality, and therefore
\begin{equation}\label{E:smile}
\mu\left(\bigcup_{A\in\mathcal{P}_{m,k}^{l,\infty}} A\right)\leq \mu\left(\bigcup_{i=1}^m\left\lbrace |\varphi_i|\leq 2^k\right\rbrace\right)\leq \frac{m}{2^{2k}}.
\end{equation}
As a consequence, we see that for sufficiently large $k$,
\[\sum_{A\in\mathcal{P}_{m,k}^{l,\infty}} \int_{A} \pi_m(f)^2d\mu=\int_{\bigcup_{A\in\mathcal{P}_{m,k}^{l,\infty}} A}\pi_m(f)^2 d\mu<\frac{\varepsilon^2}{16},\]
and by Jensen's inequality,
\begin{align}\label{E:firstbound}
\sum_{A\in\mathcal{P}_{m,k}^{l,\infty}} \int_{A} &\left(\frac{1}{\mu_l(A)}\int_A(\pi_m(f)(x)-\pi_m(f)(y))\mu_l(dy)\right)^2\mu_l(dx)\notag\\
&\leq\sum_{A\in\mathcal{P}_{m,k}^{l,\infty}} \int_{A} \frac{1}{\mu_l(A)}\int_A(\pi_m(f)(x)-\pi_m(f)(y))^2\mu_l(dy)\mu_l(dx)\notag\\
&\leq 8 \sum_{A\in\mathcal{P}_{m,k}^{l,\infty}} \int_{A} \pi_m(f)^2d\mu_l
\leq \frac{\varepsilon^2}{2}.
\end{align}

For $A\in \mathcal{P}_{m,k}^l\setminus\mathcal{P}_{m,k}^{l,\infty}$ and $x\in A$ we observe 
$$
\left(\frac{1}{\mu_l(A)}\int_A (\pi_m(f)(x)-\pi_m(f)(y))\mu_l(dy)\right)^2
$$$$=\left(\sum_{i=1}^m c_i\:\frac{1}{\mu_l(A)}\int_A(\varphi_i(x)-\varphi_i(y))\mu_l(dy)\right)^2\notag
$$$$\leq \sum_{j=1}^m c_j^2 \sum_{i=1}^m \frac{1}{\mu_l(A)}\int_A(\varphi_i(x)-\varphi_i(y))^2\mu_l(dy)\notag
 \leq \frac{m}{2^{2k}}\:\left\|f\right\|_{L^2(X,\mathcal{X},\mu)}^2.\notag
$$
Here we have used Cauchy-Schwarz, Parseval's identity, Jensen's inequality and the fact that for $x,y\in A$,
$|\varphi_i(x)-\varphi_i(y)|\leq 2^{-k}$, \   $i=1,...,m$.
If $k$ is sufficiently large,
\begin{align}\label{E:est1}
\sum_{A\in\mathcal{P}_{m,k}^l\setminus\mathcal{P}_{m,k}^{l,\infty}} \int_{A}&\left(\frac{1}{\mu_l(A)}\int_A(\pi_m(f)(x)-\pi_m(f)(y))\mu_l(dy)\right)^2\mu_l(dx)\notag\\
&\leq \frac{m}{2^{2k}}\:\left\|f\right\|_{L^2(X,\mathcal{X},\mu)}^2\:\mu(X_l) <\frac{\varepsilon^2}{2}.
\end{align}
Combining (\ref{E:est1}) with (\ref{E:firstbound}) we arrive at (\ref{E:approx}).
\end{proof}

For any $k$, $l$, $m$ and $n$ consider the bounded Dirichlet forms $\mathcal{E}^{(n,m,l,k)}$ defined by
\begin{equation}\label{E:finiteDF}
\mathcal{E}^{(n,m,l,k)}(f):=2^n\left\langle\pi_{m,l,k}(f)-P_{2^{-n}}\pi_{m,l,k}(f),\pi_{m,l,k}(f)\right\rangle_{L^2(X,\mathcal{X},\mu)}, \ \ f\in L^2(X,\mathcal{X},\mu).
\end{equation}
Their generators are given by $\mathcal{L}^{(n,m,l,k)}=2^n \pi_{m,l,k}^\ast(P_{2^{-n}}-I)\pi_{m,l,k}$. From Lemma \ref{L:approxL2} we obtain
\[\lim_k\mathcal{E}^{(n,m,l,k)}(f)=\mathcal{E}^{(n,m,l)}(f)\]
for any fixed $l$, $m$ and $n$ and all $f\in L^2(X,\mathcal{X},\mu)$. Therefore we may again follow the arguments in the proof of Lemma \ref{L:sgapprox} to obtain a final approximation step.

\begin{lemma}\label{L:finite}
For any fixed $n$, $m$ and $l$ the forms $\mathcal{E}^{(n,m,l,k)}$ converge to the form $\mathcal{E}^{(n,m,l)}$ in the sense of Mosco as $k$ goes to infinity. Their generators $\mathcal{L}^{(n,m,l,k)}$ converge to the generator $\mathcal{L}^{(n,m)}$ of $\mathcal{E}^{(n,m,l)}$ in the strong resolvent sense.
\end{lemma}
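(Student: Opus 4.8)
The plan is to deduce the statement directly from the $L^2$-convergence of Lemma~\ref{L:approxL2}, using that all the forms involved are bounded quadratic forms on the \emph{same} Hilbert space $L^2(X,\mathcal{X},\mu)$, so that Mosco convergence can be checked straight from the definition. First I would record three elementary facts. Writing $\Phi(g):=2^n\langle g-P_{2^{-n}}g,g\rangle_{L^2(X,\mathcal{X},\mu)}$, the functional $\Phi$ is a nonnegative quadratic form on $L^2(X,\mathcal{X},\mu)$ which is continuous for the norm topology, because $I-P_{2^{-n}}$ is a bounded contraction; moreover $\mathcal{E}^{(n,m,l,k)}(f)=\Phi(\pi_{m,l,k}(f))$ and $\mathcal{E}^{(n,m,l)}(f)=\Phi(\pi_m(f)\mathbf{1}_{X_l})$. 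Secondly, $\pi_{m,l,k}$ is the composition of $\pi_m$ with the orthogonal projection of $L^2(X,\mathcal{X},\mu)$ onto $\lin\{\mathbf{1}_{A\cap X_l}:A\in\mathcal{P}_{m,k}^l\}$ (the $\mathbf{1}_{A\cap X_l}$ being pairwise orthogonal and nonzero), hence a linear contraction with $\|\pi_{m,l,k}(g)\|_{L^2(X,\mathcal{X},\mu)}\le\|\pi_m(g)\|_{L^2(X,\mathcal{X},\mu)}$ for every $g$. Thirdly, since $\pi_m$ has finite-dimensional range it maps weakly convergent sequences to norm-convergent ones: if $f_k\to f$ weakly in $L^2(X,\mathcal{X},\mu)$, then $\pi_m(f_k)=\sum_{i=1}^m\langle f_k,\varphi_i\rangle_{L^2(X,\mathcal{X},\mu)}\varphi_i\to\sum_{i=1}^m\langle f,\varphi_i\rangle_{L^2(X,\mathcal{X},\mu)}\varphi_i=\pi_m(f)$ in $L^2(X,\mathcal{X},\mu)$.

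The crux is then a one-line estimate: for a weakly convergent sequence $f_k\to f$ the triangle inequality together with the second and third facts gives
\[\left\|\pi_{m,l,k}(f_k)-\pi_m(f)\mathbf{1}_{X_l}\right\|_{L^2(X,\mathcal{X},\mu)}\le\left\|\pi_m(f_k-f)\right\|_{L^2(X,\mathcal{X},\mu)}+\left\|\pi_{m,l,k}(f)-\pi_m(f)\mathbf{1}_{X_l}\right\|_{L^2(X,\mathcal{X},\mu)},\]
and the right-hand side tends to $0$, the first summand by the finite-rank property and the second by Lemma~\ref{L:approxL2} applied to the fixed limit $f$. Hence $\pi_{m,l,k}(f_k)\to\pi_m(f)\mathbf{1}_{X_l}$ in $L^2(X,\mathcal{X},\mu)$, and by the norm-continuity of $\Phi$ we obtain $\mathcal{E}^{(n,m,l,k)}(f_k)=\Phi(\pi_{m,l,k}(f_k))\to\Phi(\pi_m(f)\mathbf{1}_{X_l})=\mathcal{E}^{(n,m,l)}(f)$.

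Both Mosco conditions are now immediate. For (i) every sequence $f_k\to f$ weakly in $L^2(X,\mathcal{X},\mu)$ satisfies $\liminf_k\mathcal{E}^{(n,m,l,k)}(f_k)=\mathcal{E}^{(n,m,l)}(f)$; for (ii) the constant sequence $f_k:=f$ converges strongly and, being the special case $f_k=f$ of the above (equivalently, the pointwise convergence $\mathcal{E}^{(n,m,l,k)}(f)\to\mathcal{E}^{(n,m,l)}(f)$ already noted in the text preceding the statement), satisfies $\limsup_k\mathcal{E}^{(n,m,l,k)}(f)=\mathcal{E}^{(n,m,l)}(f)$. This proves the Mosco convergence. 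Since the forms $\mathcal{E}^{(n,m,l,k)}$ and $\mathcal{E}^{(n,m,l)}$ are bounded Dirichlet forms, their generators are bounded non-positive self-adjoint operators, hence generate strongly continuous contraction semigroups, and the equivalence recalled in the introduction turns the Mosco convergence of the forms into the strong resolvent convergence of the generators. I do not expect a genuine obstacle here; the only point that needs attention is that the projections $\pi_{m,l,k}$ depend on $k$, so — unlike in the $\sigma$-finiteness step of Lemma~\ref{L:sigma} — there is no monotonicity to lean on, and one really wants the finite-rank reduction above rather than a Banach--Saks argument as in the proof of Lemma~\ref{L:sgapprox}.
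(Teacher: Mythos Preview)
Your argument is correct and, in fact, cleaner than the route the paper sketches. The paper does not give a separate proof of this lemma; it simply notes the pointwise convergence $\mathcal{E}^{(n,m,l,k)}(f)\to\mathcal{E}^{(n,m,l)}(f)$ coming from Lemma~\ref{L:approxL2} and then says one may ``again follow the arguments in the proof of Lemma~\ref{L:sgapprox}'', i.e.\ a Banach--Saks extraction. As you observe in your last paragraph, that reference is not entirely innocent: the Banach--Saks step in Lemma~\ref{L:sgapprox} ultimately uses the monotonicity $\mathcal{E}^{(n_j)}\leq\mathcal{E}^{(n_k)}$ for $n_j\leq n_k$ to pass from the fixed-index form to the varying-index form, and no such monotonicity in $k$ is available for $\mathcal{E}^{(n,m,l,k)}$.

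Your substitute is to exploit the finite rank of $\pi_m$: weak convergence $f_k\to f$ becomes strong convergence $\pi_m(f_k)\to\pi_m(f)$, and since $\pi_{m,l,k}$ factors as an orthogonal projection applied to $\pi_m$, the contraction bound $\|\pi_{m,l,k}(g)\|\leq\|\pi_m(g)\|$ holds. Together with Lemma~\ref{L:approxL2} this yields $\pi_{m,l,k}(f_k)\to\pi_m(f)\mathbf{1}_{X_l}$ strongly, and norm-continuity of $\Phi$ then gives the \emph{actual limit} $\mathcal{E}^{(n,m,l,k)}(f_k)\to\mathcal{E}^{(n,m,l)}(f)$, which is strictly stronger than both Mosco inequalities. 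This finite-rank reduction is the genuinely new ingredient compared with the paper's sketch; it sidesteps the monotonicity issue entirely and also makes the strong resolvent convergence transparent (the generators converge strongly as bounded operators). The trade-off is that your argument is specific to this step---it uses that $\pi_m$ is already present in the definition of both forms---whereas the paper's intention was presumably to invoke a single template for all four approximation lemmas.
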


In other words, any separable and bounded Dirichlet form with bounded reference measure (on a finite dimensional $L^2$-space) is the limit of essentially discrete energy forms (isomorphic, except for subset of functions of zero energy, to the finite dimensional energy form on a finite graph  as in Section \ref{S:finite}).

Combining Lemmas \ref{L:sgapprox}, \ref{L:galerkin}, \ref{L:sigma} and \ref{L:finite} we now obtain the following Corollary, which implies Theorem \ref{T:main2}.

\begin{corollary}\label{C:main2}
Let $(X,\mathcal{X},\mu)$ be a $\sigma$-finite measure space with a countably generated $\sigma$-algebra $\mathcal{X}$ and let $(\mathcal{E},\mathcal{F})$ be a Dirichlet form on $L^2(X,\mathcal{X},\mu)$ with generator $\mathcal{L}$. Let $\mathcal{E}^{(n,m,l,k)}$ be the bounded Dirichlet forms as defined in (\ref{E:finiteDF}) with generators $\mathcal{L}^{(n,m,l,k)}$. Then we have 
\[\lim_n\lim_m\lim_l\lim_k \mathcal{E}^{(n,m,l,k)}=\mathcal{E}\]
in the sense of Mosco convergence and
\[\lim_n\lim_m\lim_l\lim_k \mathcal{L}^{(n,m,l,k)}=\mathcal{L}\]
in the strong resolvent sense.
\end{corollary}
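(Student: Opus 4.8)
The plan is to prove Corollary~\ref{C:main2} by assembling the four approximation lemmas into a single iterated limit, using the fact that strong resolvent convergence (equivalently, Mosco convergence for the Dirichlet forms at hand) is metrizable on a suitable set of operators, so that iterated limits can be collapsed to a diagonal sequence. First I would recall that for uniformly contractive strongly continuous semigroups on the fixed separable Hilbert space $L^2(X,\mathcal{X},\mu)$, strong resolvent convergence of the (non-positive, self-adjoint) generators is equivalent to strong convergence of the resolvents $G_\lambda^{(\cdot)}$ at a single $\lambda>0$ (by \cite[Theorem VIII.19]{RS} and \cite[Theorem 1.3, Ch.~8]{Kato}), and that strong convergence of a uniformly bounded net of operators, tested against the countable dense set $\{\varphi_i\}$ and producing limits in the separable space $L^2(X,\mathcal{X},\mu)$, is induced by a metric, e.g. $d(S,T):=\sum_{i}2^{-i}\min(1,\|S\varphi_i-T\varphi_i\|_{L^2(X,\mathcal{X},\mu)})$ applied to $S=G_\lambda^{(\cdot)}$. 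Thus each of Lemmas~\ref{L:sgapprox}, \ref{L:galerkin}, \ref{L:sigma}, \ref{L:finite} says exactly that a corresponding $d$-distance tends to $0$.

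The core of the argument is then a routine diagonalization. Working from the innermost index outward: by Lemma~\ref{L:finite}, for fixed $n,m,l$ we have $d\big(G_\lambda^{(n,m,l,k)},G_\lambda^{(n,m,l)}\big)\to 0$ as $k\to\infty$, so choose $k=k(n,m,l)$ making this distance $<2^{-l}$ (say); by Lemma~\ref{L:sigma}, $d\big(G_\lambda^{(n,m,l)},G_\lambda^{(n,m)}\big)\to 0$ as $l\to\infty$, so choose $l=l(n,m)$ making this $<2^{-m}$ and large enough that $2^{-l(n,m)}<2^{-m}$; by Lemma~\ref{L:galerkin}, choose $m=m(n)$ with $d\big(G_\lambda^{(n,m)},G_\lambda^{(n)}\big)<2^{-n}$ and $2^{-m(n)}<2^{-n}$; finally by Lemma~\ref{L:sgapprox}, $d\big(G_\lambda^{(n)},G_\lambda\big)\to 0$. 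A triangle-inequality estimate along the chain $(n,m(n),l(n,m(n)),k(n,m(n),l(\cdots)))$ shows the diagonal resolvents converge strongly to $G_\lambda$, hence the diagonal generators converge to $\mathcal{L}$ in the strong resolvent sense, and therefore (by \cite[Theorem 2.4.1]{Mosco94}) the diagonal forms converge to $\mathcal{E}$ in the Mosco sense. Since the statement is phrased with nested limits, I would also remark that reading the iterated limit literally is legitimate because each inner limit exists (is the resolvent of the corresponding form, by the cited lemma) and lies again in the metrizable space on which the next outer limit operates; so $\lim_n\lim_m\lim_l\lim_k$ is well defined and equals the diagonal limit.

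The one genuinely delicate point, and the step I expect to require the most care, is making sure the \emph{inner limits actually converge in the operator-theoretic sense required for the next lemma to apply}, rather than merely in some weaker sense. Each lemma's hypotheses are about a fixed Dirichlet form (respectively $\mathcal{E}^{(n)}$, $\mathcal{E}^{(n,m)}$, $\mathcal{E}^{(n,m,l)}$) and its generator; after taking the inner limit we must know the limit object is precisely that form/generator, which is exactly what the corresponding lemma asserts, so the bookkeeping closes — but one must verify that the forms are genuinely nested/monotone where claimed (the displayed increasing-in-$l$ property for $\mathcal{E}^{(n,m,l)}$, the $\sup_t$-formula \eqref{E:sgapprox} for $\mathcal{E}^{(n)}\nearrow\mathcal{E}$, and the $L^2$-convergence $\pi_{m,l,k}(f)\to\pi_m(f)\mathbf 1_{X_l}$ from Lemma~\ref{L:approxL2}) so that no hidden equicontinuity or uniformity in the outer parameters is needed. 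Once this is in place, the diagonal construction is mechanical, and combining it with the identification of the finite-dimensional bounded Dirichlet forms $\mathcal{E}^{(n,m,l,k)}$ with finite weighted-graph energies from Section~\ref{S:finite} yields Theorem~\ref{T:main2}.
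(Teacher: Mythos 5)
Your proposal is correct and follows essentially the same route as the paper, whose proof of the corollary consists precisely of chaining Lemmas \ref{L:sgapprox}, \ref{L:galerkin}, \ref{L:sigma} and \ref{L:finite} and reading the statement as an iterated limit. The explicit metrization of strong resolvent convergence and the diagonalization you add are not needed for the corollary as stated, but they supply exactly the step---left implicit in the paper---that converts the iterated limit into the single approximating sequence of essentially discrete forms claimed in Theorem \ref{T:main2}.
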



\begin{thebibliography}
\normalsize
\bibitem{Allain}
G. Allain, \emph{Sur la repr\'esentation des formes de Dirichlet},
Ann. Inst. Fourier {\bf 25} (1975), 1-10.

\bibitem{Loll}
J. Ambj{\o}rn, J. Jurkiwicz, and R. Loll,
\emph{Spectral Dimension of the Universe}.
  Phys. Rev. Lett. 95:171301, 2005.
  
\bibitem{And75}
L.-E. Andersson, \emph{On the representation of Dirichlet forms},
Ann. Inst. Fourier 25 (3 et 4) (1975), 11-25.

\bibitem{BBKT}{M.T.~Barlow, R.F.~Bass, T.~Kumagai  and \mbox{A. Teplyaev}, %%***%%
\emph{Uniqueness of Brownian motion on Sierpinski carpets}. 
J. Eur. Math. Soc.  {\bf12} (2010),   655--701. }

\bibitem{Blackadar}
B. Blackadar, \emph{Operator Algebras: Theory of $C^\ast$-Algebras and von Neumann Algebras},
Encyclopedia of Math. Sciences {\bf 122}, Springer, New York, 2006.

\bibitem{BH91}
N. Bouleau, F. Hirsch, \emph{Dirichlet Forms and Analysis on Wiener Space},
deGruyter Studies in Math. 14, deGruyter, Berlin, 1991.

\bibitem{Bourbaki}
N. Bourbaki, \emph{General Topology}, Springer, Berlin, 1966.

\bibitem{C06}
F. Cipriani, \emph{Dirichlet forms as Banach algebras and applications}, 
Pac. J. Math. {\bf 223}(2) (2006), 229-249.

 \bibitem{ChF} Z.-Q. Chen and M. Fukushima, 
\emph{Symmetric Markov processes, time change, and boundary theory}.  
London Mathematical Society Monographs Series, 35. 
Princeton University Press, Princeton, NJ, 2012. 


\bibitem{Englert}
F. Englert, J.-M. Frere, M. Rooman, Ph. Spindel, \emph{Metric space-time as fixed point of the renormalization group equations on fractal structures}, Nuclear Physics B,   {\bf280}, (1987), Pages 147--180, 


\bibitem{FOT94}
M. Fukushima, Y. Oshima and M. Takeda, \emph{Dirichlet forms and symmetric Markov processes},
deGruyter, Berlin, New York, 1994.



		

\bibitem{Gordin}
M. I.   Gordin,  
\emph{The central limit theorem for stationary processes}.
  Dokl. Akad. Nauk SSSR  {\bf188}  (1969) 739--741.
\bibitem{GP}
M. I. Gordin, M.  Peligrad,  \emph{On the functional central limit theorem via martingale
 approximation}.
 Bernoulli  {\bf17}  (2011),   424--440.
		
\bibitem{H14}
M. Hinz, \emph{Sup-norm-closable bilinear forms and Lagrangians},
to appear in Ann. Mat. Pura Appl. (2015+). 


\bibitem{HKT}
M. Hinz, D. Kelleher, A. Teplyaev, \emph{Measures and Dirichlet forms under the Gelfand transform}, 
Probability and statistics {\bf18}, 
Zapiski Nauchnyh Seminarov POMI {\bf408}, (2012), 303--322;
J. Math. Sciences, Springer, 2014. 

\bibitem{HRT}{M. Hinz, M. R\"ockner, A. Teplyaev,  \emph{Vector analysis for 
%{\n regular} 
Dirichlet forms and quasilinear PDE and SPDE 
on fractals},  
 Stochastic Process. Appl. 
{\bf123}  (2013),   4373--4406}.

\bibitem{HTdms}{M. Hinz,   A. Teplyaev, \emph{Dirac and magnetic Schr\"odinger operators on fractals}.   J. Funct. Anal. {\bf265} (2013),  2830--2854.}  

\bibitem{HThodge}{M. Hinz,   A. Teplyaev, \emph{Local Dirichlet forms, Hodge theory, and the Navier-Stokes equations on topologically one-dimensional fractals},  
Trans. Amer. Math. Soc. {\bf367}  (2015),   1347--1380.}

\bibitem{Kaniuth}
E. Kaniuth, \emph{A Course in Commutative Banach Algebras},
Springer, New York, 2009.

\bibitem{Kato}
T. Kato, \emph{Perturbation Theory for Linear Operators},
Springer, Berlin, 1976.

\bibitem{Ki}
J. Kigami, \emph{Analysis on Fractals}, Cambridge Univ. Press, Cambridge, 2001.

 \bibitem 
{KZ} S. Kusuoka and X. Y. Zhou,
\emph{Dirichlet forms on fractals: Poincar\'e constant and resistance.}\
Probab. Theory Related Fields \textbf{93} (1992), 169--196.

\bibitem{MR92}
Z.-M. Ma, M. R\"ockner, \emph{Introduction to the Theory of Non-Symmetric Dirichlet Forms}, 
Universitext, Springer, Berlin, 1992.

\bibitem{Mok95}
G. Mokobodzki, \emph{Fermeabilit\'e des formes du Dirichlet et in\'egalit\'e de type Poincar\'e}, Pot. Anal. {\bf 4} (1995), 409-413.

\bibitem{Mosco94}
U. Mosco: \textit{Composite media and asymptotic Dirichlet forms},
J. Funct. Analysis, \bf123 \normalfont (1994), 368--421.

 \bibitem%[Nekrashevich]
{Nekrashevich} V. Nekrashevich,
\emph{Self-similar groups.}\   Mathematical Surveys and Monographs, 117. AMS, 2005. 

\bibitem{NT}   {V. Nekrashevych  and \mbox{A. Teplyaev},
\emph{Groups and analysis on fractals.}
Proceedings of Symposia in Pure Mathematics, AMS, 
{\bf77} (2008), 143--182.}

\bibitem{RS}
M. Reed, B. Simon, \emph{Methods of Modern Mathematical Physics, vol. 1}, Acad. Press, San Diego 1980.

\bibitem{Reuter}
M. Reuter, F. Saueressig, \emph{Fractal space-times under the microscope: a renormalization group view on Monte Carlo data}, Journal of High Energy Physics {\bf12} (2011) 10.1007/JHEP12(2011)012

\bibitem{Str} R. S. Strichartz, \emph{Differential equations on
fractals: a tutorial.} Princeton University Press, 2006.

\end{thebibliography}
\end{document}